\newcounter{algosavedequation}
\newcommand{\algoequations}{%
  \setcounter{algosavedequation}{\value{equation}+1}%
  \setcounter{equation}{0}%
  \renewcommand{\theequation}{\arabic{algosavedequation}\alph{equation}}
}
\newcommand{\restoreequations}{%
  \setcounter{equation}{\value{algosavedequation}}%
}
\newtheorem{assumption}{Assumption}
\newtheorem{problem}{Problem}
\newtheorem{remark}{Remark}
\newtheorem{theorem}{Theorem}
\newtheorem{proposition}{Proposition}
\newtheorem{lemma}{Lemma}
\newcommand{\mc}{\mathcal}
\newcommand{\real}{\mathbb{R}}
\newcommand{\naturalpos}{\mathbb{N}_{>0}}
\newcommand{\realpos}{\mathbb{R}_{> 0}}
\newcommand{\realnneg}{\mathbb{R}_{\geq 0}}
\newcommand{\tsp}{\mathsf{T}} 
\newcommand{\inv}{{\negat 1}} 
\newcommand{\negat}{\scalebox{0.75}[.9]{\( - \)}}
\newcommand\oprocendsymbol{\hbox{$\square$}}
\newcommand\oprocend{\relax\ifmmode\else\unskip\hfill%
\fi\oprocendsymbol}
\newcommand{\R}{\mathbb{R}} 
\newcommand{\proj}{\Pi_{\mathcal{C}}}
\newcommand{\map}[3]{#1: #2 \rightarrow #3}
\newcommand{\setdef}[2]{\{#1 \; : \; #2\}}
\newcommand{\sbs}[2]{{#1}_{\textup{#2}}}
\newcommand{\sps}[2]{{#1}^{\textup{#2}}}
\newcommand{\W}{\mathcal{W}}
\newcommand*{\QEDB}{\hfill\ensuremath{\square}}
\newcommand*{\QEDBL}{\hfill\ensuremath{\blacksquare}}
\newcommand{\norm}[1]{\Vert #1 \Vert}
\begin{document}


\title{Online Optimization of Dynamical Systems With Deep Learning Perception} 


\author{~Liliaokeawawa Cothren\affilmark{1}}

\author{Gianluca Bianchin\affilmark{1}}

\author{Emiliano Dall'Anese\affilmark{1}}

\affil{Department of Electrical, Computer, and Energy Engineering,
        University of Colorado Boulder, Boulder, CO, USA}



\markboth{ONLINE OPTIMIZATION OF DYNAMICAL SYSTEMS WITH DEEP LEARNING PERCEPTION}{L. Cothren {\itshape ET AL}.}

\begin{abstract}
This paper considers the problem of controlling  a dynamical system when the state cannot be directly measured and the control performance metrics are unknown or partially known. In particular, we focus on the design of data-driven controllers to regulate a dynamical system to the solution of a constrained convex optimization problem where: i) the state must be estimated from nonlinear and possibly high-dimensional data; and, ii) the cost of the optimization problem -- which models control objectives associated with inputs and states of the system -- is not available and must be learned from data. We propose a data-driven feedback controller that is based on adaptations of a projected gradient-flow method; the controller includes neural networks as integral components for the estimation of the unknown functions. Leveraging  stability theory for perturbed systems, we derive sufficient conditions to guarantee exponential input-to-state stability (ISS) of the control loop. In particular, we show that the interconnected system is ISS with respect to the approximation errors of the neural network  and  unknown  disturbances affecting the system. The transient bounds  combine the universal approximation property of deep neural networks
with the ISS characterization. Illustrative numerical results are presented in the context of robotics and  control of epidemics.   
\end{abstract}

\begin{IEEEkeywords}
Optimization, Gradient methods, Neural networks, Regulation, Perception-based control. 
\end{IEEEkeywords}

\maketitle

\vspace{-.5cm}

\section{Introduction}
\label{sec:introduction}

Control frameworks for modern engineering and societal systems critically rely on 
the use of perceptual information from sensing and estimation mechanisms. 
Extraction of critical information for feedback control increasingly requires the 
processing of high-dimensional sensory data obtained from nonlinear sensory 
systems~\cite{dean2020robust,dean2021certainty,Marchi2022,al2020accuracy}, and the
interpretation of information received from humans interacting with the system 
regarding the end-user perception of safety, comfort, or
(dis)satisfaction~\cite{bajcsy2021analyzing,simonetto2021personalized}. For 
example, control systems in  autonomous driving rely on positioning information 
extracted from camera images~\cite{dean2020robust} and must account for the 
perception of the safety of the vehicle occupants~\cite{Nunen}. In power grids, 
state feedback is derived from nonlinear state estimators or 
pseudo-measurements~\cite{manitsas2008modelling}, and control goals must account 
for comfort and satisfaction objectives of the end-users that are difficult to 
model~\cite{LASSEN2020smiley}. 

Within this broad context, this paper considers the problem of developing feedback controllers for dynamical systems where the acquisition of information on the system state and on the control performance metrics requires a systematic integration of supervised learning methods in the controller  design process. The particular problem that is tackled in this paper pertains to the design of feedback controllers to steer a dynamical system toward the solution of a constrained convex optimization problem, where the cost models objectives are associated with the state and the controllable inputs. The design of feedback controllers inspired by first-order optimization methods has received significant attention during the last decade~\cite{Jokic2009controller,brunner2012feedback,Hirata,lawrence2018optimal,MC-ED-AB:20,zheng2019implicit,hauswirth2020timescale,bianchin2020online,bianchin2021time,hauswirth2021optimization}; see also the recent line of work on using online optimization methods for discrete-time linear time-invariant (LTI) systems~\cite{nonhoff2021data,nonhoff2021online,minasyan2021online,lin2021perturbation}. However, open research questions remain on how it is possible to systematically integrate learning methods in the control loop when information on the system and on the optimization model is not directly available, and on how to analyze the robustness and safety of optimization-based controllers in the presence of learning errors. 

In this work, we investigate the design of feedback controllers, based on an adaptation of the projected gradient flow method~\cite{YSX-JW:00,bianchin2021time}, combined with learning components where: i) estimates of the state of the system are provided by feedforward neural networks~\cite{hornik1989multilayer,barron1994approximation} and residual neural networks~\cite{tabuada2020universal,Tabuada-pmlr-v144-marchi21a}, and ii) the gradient information is acquired via finite-differences based on a deep neural network  approximation of the costs.
When the neural network-based controller is interconnected to the 
dynamical system, we establish conditions that guarantee input-to-state stability \cite{sontag2008input,angeli2003input} by leveraging tools from the theory of perturbed systems~\cite[Ch. 9]{Khalil:1173048}  and singular perturbation theory~\cite[Ch. 11]{Khalil:1173048}. In particular, the ISS bounds show how the transient and asymptotic behaviors of the interconnected system are related to the neural network approximation errors. When the system is subject to unknown disturbances, the ISS bounds account also for the time-variability of the disturbances. 

\emph{Prior works}. Perception-based control of discrete-time linear time-invariant systems is considered in, e.g.,~\cite{dean2020robust,dean2021certainty}, where the authors study the effect 
of state estimation errors on controllers designed via system-level synthesis. Further insights on the tradeoffs between learning accuracy and performance are offered in~\cite{al2020accuracy}. For continuous-time systems, ISS results for dynamical systems with deep neural network approximations of state observers and controllers are provided in~\cite{Marchi2022}. Differently from~\cite{Marchi2022}, we consider the estimation of states and cost functions, and the interconnection of optimization-based controllers with dynamic plants. 
Optimization methods with learning of the cost function are considered in, e.g.,~\cite{simonetto2021personalized,fabiani2021learning, notarnicola2021distributed} (see also references therein); however, these optimization algorithms  are not implemented in closed-loop with a dynamic plant.
Regarding control problems for dynamical systems, existing approaches leveraged gradient-flow controllers~\cite{menta2018stability,bianchin2020online}, proximal-methods in~\cite{MC-ED-AB:20},  prediction-correction 
methods~\cite{zheng2019implicit}, and hybrid accelerated 
methods~\cite{bianchin2020online}. Plants with nonlinear dynamics were considered in~\cite{brunner2012feedback,hauswirth2020timescale}, and switched LTI systems in~\cite{bianchin2021time}. A joint stabilization and regulation problem was considered in~\cite{lawrence2018linear,lawrence2018optimal}. See also the recent survey by~\cite{hauswirth2021optimization}. In all of these works, the states and outputs are assumed to be observable and cost functions are known.

We also acknowledge works where controllers are learned using neural networks; see, for example, \cite{karg2020stability,HY-PS-MA:21,Marchi2022}, and the work on reinforcement learning in~\cite{MJ-JL:20}. Similarly to this literature, we leverage neural networks  to supply state and gradient estimates to a projected gradient-flow controller.  
By analogy with dynamical systems, optimization has been applied to Markov decision processes in e.g.~\cite{PM-JN:01}.

Finally, we note that ISS applied to perturbed gradient flows was investigated 
in~\cite{sontag2021remarks}. In this work, we consider interconnections
between a perturbed, projected gradient-flow and a dynamical  system, and combine 
the theory of perturbed systems~\cite[Ch. 9]{Khalil:1173048}  with singular perturbation~\cite[Ch. 11]{Khalil:1173048}. Our ISS bounds are then 
customized for feedforward neural 
networks~\cite{hornik1989multilayer,barron1994approximation} and residual neural 
networks~\cite{tabuada2020universal,Tabuada-pmlr-v144-marchi21a}. We also acknowledge~\cite{poveda2021data}, where basis expansions are utilized to learn a function, which is subsequently minimized via extremum seeking. 

Finally, the preliminary work~\cite{cothren2021data} used a gradient-flow 
controller in cases where the optimization cost is learned via least-squares 
methods. Here, we extend~\cite{cothren2021data} by accounting for systems with 
nonlinear dynamics, by using neural networks instead than parametric estimation 
techniques, by considering errors in the state estimates, and by combining ISS 
estimates with neural network approximation results.

\emph{Contributions}. The contribution of this work is threefold.
First, we characterize the transient performance of a projected gradient-based 
controller applied to a nonlinear dynamical system while operating with  
errors on the gradient. Our analysis is based on tools from ISS analysis of  nonlinear dynamical systems; more precisely, we leverage Lyapunov-based singular
perturbation reasonings to prove that the proposed control method guarantees 
that the controlled system is ISS with respect to the  variation of exogenous disturbances affecting the system and the error in the gradient. This fact is 
remarkable because unknown exogenous disturbances introduce shifts in the 
equilibrium point of the system to control.
Second, we propose a framework where optimization-based controllers are used in combination with deep neural networks. We tailor our results to two types of deep neural networks that can be used for 
this purpose: deep residual networks and deep feedforward networks. We then combine the universal approximation property of deep neural networks  with the ISS characterization, and we provide an explicit transient bound for 
feedback-based optimizing controllers with neural-network state estimators.
Third, we propose a novel framework where deep neural networks are used to 
estimate the gradients of the cost functions characterizing the control goal  
based on training data. Analogously to the case above, we tailor our results to two cases: deep residual networks and feedforward networks. In this case, we leverage our ISS analysis to 
show how it is possible to design optimization-based controllers 
to accomplish the target control task, and we provide an explicit transient 
bound for these methods.
Finally, we illustrate the benefits of the methods in: (i) an application in robotic control and, (ii) the problem of controlling the outbreak of an
epidemic modeled by using a susceptible-infected-susceptible model.
Overall, our results show for the first time that the universal approximation 
properties of deep neural networks can be harnessed, in combination with the 
robustness properties of feedback-based optimization algorithms, to provide 
guarantees in perception-based control.
 
In conclusion, we highlight that the assumptions and control frameworks outlined in this paper find applications in, for example, power systems~\cite{MC-ED-AB:20,hauswirth2020timescale,menta2018stability,lawrence2018optimal}, traffic flow control in transportation networks~\cite{bianchin2021time}, epidemic control~\cite{bianchin2021planning}, and in neuroscience~\cite{yang2021modelling}; when the dynamical model for the plant does not include exogenous disturbances, our optimization-based controllers can also be utilized in the context of autonomous driving~\cite{dean2020robust,dean2021certainty} and robotics~\cite{terpin2021distributed}.

\emph{Organization}. The remainder of this paper is organized as follows. 
Section~\ref{sec:problemformulation} describes the problem formulation and 
introduces some key preliminaries used in our analysis. 
Section~\ref{sec:error_gradient} presents a main technical result that 
characterizes an error bound for gradient-type controllers with arbitrary 
gradient error.
In Sections~\ref{sec:statePerception} and \ref{sec:costPerception}, we present our
main control algorithms corresponding to the case of state perception and cost 
perception, respectively. Section~\ref{sec:results} illustrates our simulation 
results and Section~\ref{sec:conclusions} concludes the paper.

\section{Preliminaries and Problem Formulation}
\label{sec:problemformulation}

We first outline the notation used throughout the paper and provide relevant definitions.

\emph{Notation}. We denote by \( \mathbb{N}, \naturalpos, \R, \realpos, \text{ and } \realnneg \) the set of natural numbers, the set of positive natural numbers, the set of real numbers, the set of positive real numbers, and the set of non-negative real numbers. For vectors \( x \in \R^n \) and \( u \in \R^m \),
\( \| x \| \) denotes the  Euclidean norm of $x$, \( \| x \|_\infty \) denotes the 
supremum norm, and \( (x,u) \in \R^{n + m}\) denotes their vector concatenation; $x^\top$ denotes transposition, and $x_i$ denotes the $i$-th element of $x$. For a matrix $A \in \R^{n \times m}$, $\|A\|$ is the induced $2$-norm and $\|A\|_\infty$ the supremum norm.

The set $\mathcal{B}_n(r) := \{ z \in \R^{n} : \|z\| < r \}$ is the open ball in 
$\R^{n}$ with radius $r > 0$; $\mathcal{B}_n[r] := \{ z \in \R^{n} : \|z\| \leq r \}$ is the closed ball. Given two sets $\mathcal{X} \subset  \mathbb{R}^n$ and  $\mathcal{Y} \subset  \mathbb{R}^m$, $\mathcal{X} \times \mathcal{Y}$ denotes their Cartesian product; moreover, $\mathcal{X} + \mathcal{B}_n(r)$ is the open set defined as $\mathcal{X} + \mathcal{B}_n(r) = \{x + y: x \in \mathcal{X}, y \in \mathcal{B}_n(r)\}$. 
Given a closed and convex set $\mathcal{C} \subset \R^n$, $ \proj $  denotes the Euclidean projection onto the closed and 
convex set; i.e., $\proj := \arg \min_{x \in \mathcal{C}} \|x - y\|^2$. 
For continuously differentiable
$\map{\phi}{\real^n}{\real}$, \( \nabla \phi(x) \in \real^n\) denotes its gradient.
If the function is not differentiable at a point $x$, $\partial \phi(x)$ denotes 
its subdifferential.

\emph{Partial ordering}. The first orthant partial order on $\R^n$ is denoted as $\preceq$ and it is defined as follows: for any $x, z \in \R^n$, we say that $x \preceq z$ if $x_i \leq z_i$ for $i = 1, \ldots, n$. We say that a function $\phi: \R^n \to \R^n$ is monotone if for any $x, z \in \R^n$ such that $x \preceq z$, we have that $\phi(x) \leq \phi(z)$. Finally, the interval $[x, z]$, for some $x, z \in \R^n$, is defined as $[x, z] = \{w \in \R^n: x \preceq w \preceq z\}$. 

\emph{Set covering}. Let $\mathcal{Q}, \mathcal{Q}_s \subset \R^n$, with $\mathcal{Q}$ compact. We say that $\mathcal{Q}_s$ is an $\varrho$-cover of $\mathcal{Q}$, for some $\varrho > 0$, if for any $x \in \mathcal{Q}$ there exists a $z \in \mathcal{Q}_s$ such that $\|x - z\|_\infty \leq \varrho$.  We say that $\mathcal{Q}_s$ is an $\varrho$-cover of $\mathcal{Q}$ ``with respect to the partial order $\preceq$,'' for some $\varrho > 0$, if for any $x \in \mathcal{Q}$ there exists $w,z \in \mathcal{Q}_s$ such that $x \in [w,z]$ and $\|w - z\|_\infty \leq \varrho$~\cite{Tabuada-pmlr-v144-marchi21a}.

\subsection{Model of the Plant}

We consider systems that can be modeled using continuous-time nonlinear 
dynamics:
\begin{align}
\label{eq:plantModel}
 \dot x & =  f(x, u, w_t), \hspace{.4cm} x(t_0) = x_0 
\end{align}
where 
$f: \mathcal{X} \times \mathcal{U} \times \mathcal{W} \rightarrow \real^{n}$, with
$\mathcal{X} \subseteq \real^{n}$, $\mathcal{U} \subseteq \real^{n_u}$, 
$\mathcal{W} \subseteq \real^{n_w}$ open and connected sets.
In~\eqref{eq:plantModel}, $\map{x}{\realnneg}{\mc X}$ denotes the state, 
$x_0 \in \mathcal{X}$ is the initial condition,  
$\map{u}{\realnneg}{\mc U}$  is the control input, and 
$\map{w_t}{\realnneg}{\mathcal{W}}$ is a time-varying exogenous disturbance 
(the notation $w_t$ emphasizes the dependence on time). 
In the remainder, we restrict our attention to cases where 
$u \in \sbs{\mc U}{c}$ at all times, where $\sbs{\mc U}{c} \subset \mc U$ is 
compact\footnote{The sets $\mathcal{U}_c$ and $\mathcal{W}_c$ are assumed compact to reflect hardware and operational constraints in applications such as autonomous driving~\cite{dean2020robust}, power  
systems~\cite{hauswirth2020timescale,MC-ED-AB:20}, neuroscience~\cite{yang2021modelling}, control of 
epidemics~\cite{GB-ED-JP-AB:21-scirep}.}; moreover, we assume that the vector 
field $f(x,u,w)$ is continuously-differentiable and Lipschitz-continuous, with 
constants $L_x$, $L_u$, $L_w$, respectively, in its variables. 
We make the following assumptions on~\eqref{eq:plantModel}.

\vspace{-.3cm}

\begin{assumption}[\bf \textit{Steady-state map}]
\label{as:steadyStateMap}
There exists a (unique) continuously differentiable function
$h: \mathcal{U} \times \mathcal{W} \to \mathcal{X}$ 
such that, for any fixed $\bar u \in \mathcal{U}, \bar w \in \mathcal{W}$, 
$f(h(\bar u,\bar w), \bar u, \bar w) = 0$.
Moreover, $h(u,w)$ admits the decomposition $h(u,w) = h_u(u) + h_w(w)$, where 
$h_u$ and $h_w$ are Lipschitz continuous with constants $\ell_{h_u}$ and 
$\ell_{h_w}$, respectively. 
\QEDB \end{assumption}
\vspace{-.2cm}

Assumption~\ref{as:steadyStateMap} guarantees that, with constant inputs
$\bar u, \bar w$, system \eqref{eq:plantModel} 
admits a unique equilibrium point $\bar x := h(\bar u,\bar w)$.
Notice that existence of $h(u,w)$ is always guaranteed in cases where, in 
addition, $\nabla_x f(x, \bar u, \bar w)$ is invertible for any $\bar u, \bar w$.
Indeed, in these cases, the implicit function 
theorem~\cite{hauswirth2020timescale} guarantees that  $h(u,w)$ exists and is 
differentiable, since $f(x,u,w)$ is continuously differentiable.

In this work, we interpret $w_t$ as an unknown  exogenous input modeling disturbances affecting the system. We make the following assumption on $w_t$.

\begin{figure}[t]
\centering \includegraphics[width=.8\columnwidth]{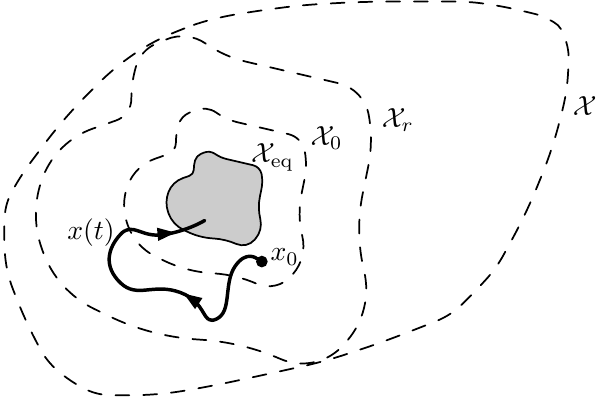}
\caption{Illustration of notation used for sets and
Assumption~\ref{as:stabilityPlant}. Continuous lines denote compact sets, 
dashed lines denote open sets. Assumption~\ref{as:stabilityPlant} guarantees
that trajectories that start from the set of initial conditions $\mc X_{0}$ do not leave $\sbs{X}{r}$.}
\label{fig:sets_illustration}
\end{figure}

\vspace{-.3cm}

\begin{assumption}[\bf\textit{Properties of exogenous inputs}]
\label{as:disturbance}
For all $t \in \realnneg$, $w_t \in \sbs{\mathcal{W}}{c}$, where 
$\sbs{\mathcal{W}}{c} \subset  \mathcal{W}$ is compact. Moreover, 
\( t \mapsto w_t\) is locally absolutely continuous on $\sbs{\mathcal{W}}{c}$.~ \QEDB
\end{assumption}

\vspace{-.3cm}

Assumption~\ref{as:disturbance} imposes basic continuity and compactness 
requirements on the exogenous disturbances affecting~\eqref{eq:plantModel}.
Following Assumption~\ref{as:disturbance}, in the remainder of this paper we 
denote by $\sbs{\mathcal{X}}{eq} := h(\sbs{\mc U}{c} \times \sbs{\mc W}{c})$ the 
set of admissible equilibrium points of the system~\eqref{eq:plantModel}. 
We note that in Assumption~\ref{as:steadyStateMap} that we consider a decomposition $h(u,w) = h_u(u) + h_w(w)$ so that the Jacobian of $h(u,w)$ with respect to $u$ does not depend on the unknown disturbance $w_t$; this property will be leveraged in the implementation of our gradient-based controller. Notably, this assumption is satisfied in, e.g., power systems~\cite{MC-ED-AB:20,hauswirth2020timescale,menta2018stability,lawrence2018optimal}, transportation networks~\cite{bianchin2021time}, and in neuroscience~\cite{yang2021modelling}. Our model clearly subsumes the case where no disturbance $w$ is present, as in the models for, e.g., autonomous driving~\cite{dean2020robust,dean2021certainty} and robotics~\cite{terpin2021distributed}. We also emphasize that the  dynamics~\eqref{eq:plantModel} can model both the dynamics of the physical system and of the stabilizing controllers; see, for example,~\cite{lawrence2018optimal}, our previous work on LTI systems in~\cite{galarza2022online}, and the recent survey~\cite{hauswirth2021optimization}.

\begin{remark}[\bf\textit{Compactness of the equilibrium set}]
Notice that the equilibrium set $\sbs{\mathcal{X}}{eq}$ is compact. This follows 
by noting that $\sbs{\mathcal{U}}{c} \times \sbs{\mathcal{W}}{c}$ is compact, 
$h(u,w)$ is continuously differentiable, and by application 
of~\cite[Theorem 4.14]{rudin1976principles}. Moreover, notice that  
$\| \nabla_u h(u,\bar w)\| \leq \ell_{h_u}$ for all $ u\in\sbs{\mathcal{U}}{c}$, 
which follows from compactness of $\sbs{\mathcal{U}}{c}$, 
see~\cite[Ch. 4]{rudin1976principles}. 
\QEDB\end{remark}
\vspace{-.3cm}

Before proceeding, we let $r$ denote the largest positive constant such that
$\mc X_r :=\sbs{\mc X}{eq} + \mc B_n(r)$ satisfies  $\mc X_r \subseteq  \mc X$ 
(see Figure~\ref{fig:sets_illustration} for an illustration).
For instance, if $\mc X = \setdef{x \in \real^n}{\norm{x}<\varrho}$ and 
$\sps{\mc X}{eq} = \{0\},$ then $r = \varrho$.

\vspace{-.2cm}
\begin{assumption}[\bf\textit{Exponential stability}]
\label{as:stabilityPlant}
There exist $a, k >0$ such that for any fixed 
$\bar u \in \sbs{\mathcal{U}}{c},\bar w \in \sbs{\mathcal{W}}{c}$, the bound
\begin{align}
\label{eq:expstability}
\|x(t) - h(\bar u, \bar w)\| \leq 
k \|x_0 - h(\bar u, \bar w)\| e^{-a (t - t_0)} ,
\end{align}
holds for all $t \geq t_0$ and for every initial condition 
$x_0 \in \mc X_{0} := \sbs{\mathcal{X}}{eq} + \mathcal{B}_n(r_0)$, $r_0<(r-\text{diam}(\sbs{\mathcal{X}}{eq} ))/k$, 
where $x(t)$ is the solution of \eqref{eq:plantModel} with  $u(t)=\bar u$ and 
$w(t)=\bar w$.
\QEDB\end{assumption}
\vspace{-.2cm}

Assumption~\ref{as:stabilityPlant} 
guarantees that $\bar x=h(\bar u, \bar w)$ is 
exponentially stable, uniformly in time. This, in turn, implies the existence of 
a Lyapunov  function as formalized in the following result, which is a direct application of~\cite[Thm.~4.14]{Khalil:1173048}.



\vspace{-.2cm}

\begin{lemma}[\bf \textit{Existence of a Lyapunov function for~\eqref{eq:plantModel}}]
\label{lem:converse_plant}
Let Assumptions~\ref{as:steadyStateMap}-\ref{as:stabilityPlant} hold and let $\mc X_0$ be the set of initial conditions as in Assumption~\ref{as:stabilityPlant}.
Then, there exists a function  
$W: \mathcal{X}_0 \times \mathcal{U} \times \mathcal{W} \to \R$ that satisfies 
the inequalities: 
\begin{align}
\label{eq:converse_plant}
&\quad\quad\quad  d_1 \|x - h(u,w)\|^2  \leq W(x,u,w) \leq d_2 \|x - h(u,w)\|^2, \nonumber\\
& \frac{\partial W}{\partial x}f(x,u,w)  \leq -d_3\|x - h(u,w)\|^2, \nonumber\\ 
&\left\| \frac{\partial W}{\partial x}\right\| \leq d_4 \|x - h(u,w)\|,\quad
\left\| \frac{\partial W}{\partial u}\right\| \leq d_5 \|x - h(u,w)\|,\nonumber\\
& \left\| \frac{\partial W}{\partial w}\right\| \leq d_6 \|x - h(u,w)\|,
\end{align}
for some positive constants $d_1\leq d_2, d_3, d_4,d_5,d_6$.  
\QEDB
\vspace{-.3cm}
\end{lemma}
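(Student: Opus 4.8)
The plan is to reduce the statement to the converse Lyapunov theorem for exponentially stable equilibria in \cite[Thm.~4.14]{Khalil:1173048} by freezing the inputs and passing to error coordinates, and then to supplement that theorem with a sensitivity argument supplying the two parametric bounds on $\partial W/\partial u$ and $\partial W/\partial w$, which are \emph{not} part of its standard statement.

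First I would fix $(u,w)\in\mathcal{U}\times\mathcal{W}$, introduce the error coordinate $e:=x-h(u,w)$, and form the autonomous error dynamics $\dot e=g_{u,w}(e):=f(e+h(u,w),u,w)$. By Assumption~\ref{as:steadyStateMap}, $g_{u,w}(0)=f(h(u,w),u,w)=0$, so the origin is an equilibrium; since $f$ and $h$ are continuously differentiable, $g_{u,w}$ is $C^1$ jointly in $(e,u,w)$ and its Jacobian $\partial g_{u,w}/\partial e=\nabla_x f$ is bounded by $L_x$. Assumption~\ref{as:stabilityPlant} supplies exactly the estimate $\|\psi(\tau;e_0,u,w)\|\le k\|e_0\|e^{-a\tau}$ required as a hypothesis of the converse theorem, where $\psi(\cdot;e_0,u,w)$ is the flow of $g_{u,w}$ from $e_0$; the condition on $r_0$ (cf.\ Figure~\ref{fig:sets_illustration}) guarantees these trajectories stay in a compact subset of $\mathcal{X}$, so all Jacobians are uniformly bounded along them. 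Applying \cite[Thm.~4.14]{Khalil:1173048} then yields $W(x,u,w)=\int_0^\delta \|\psi(\tau;x-h(u,w),u,w)\|^2\,d\tau$ for a suitable $\delta>0$, together with the quadratic sandwich bound, the decrease condition $(\partial W/\partial x)\,f\le -d_3\|e\|^2$, and $\|\partial W/\partial x\|\le d_4\|e\|$; these are precisely the first three lines of \eqref{eq:converse_plant}, once one notes that with $(u,w)$ held fixed $\partial W/\partial x=\partial W/\partial e$. Since $a,k$ are uniform in $(u,w)$, the constants $d_1,\dots,d_4$ are uniform as well.

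The step that goes beyond the textbook statement, and which I expect to be the \textbf{main obstacle}, is the pair of bounds on $\partial W/\partial u$ and $\partial W/\partial w$. I would differentiate the defining integral under the integral sign to get $\partial W/\partial u=\int_0^\delta 2\,\psi^\top S_u\,d\tau$, where the sensitivity $S_u:=\partial\psi/\partial u$ solves the linear variational equation $\dot S_u=(\partial g_{u,w}/\partial e)(\psi)\,S_u+(\partial g_{u,w}/\partial u)(\psi)$ with $S_u(0)=-\nabla_u h(u,w)$. Thanks to the decomposition $h=h_u+h_w$ in Assumption~\ref{as:steadyStateMap}, the initial condition is bounded by $\ell_{h_u}$, and both coefficient matrices are uniformly bounded over the compact set in which $\psi$ evolves; hence a Gronwall estimate on the \emph{finite} horizon $[0,\delta]$ shows $\sup_\tau\|S_u(\tau)\|$ is bounded by a constant independent of $e_0$. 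The key observation is that the factor $\|e\|=\|x-h(u,w)\|$ is then furnished by the \emph{other} factor in the integrand: with $\|\psi(\tau)\|\le k\|e\|e^{-a\tau}$ one obtains $\|\partial W/\partial u\|\le\int_0^\delta 2k\,\|e\|\,e^{-a\tau}\sup_\tau\|S_u(\tau)\|\,d\tau=:d_5\|x-h(u,w)\|$. The bound on $\partial W/\partial w$ follows identically with $S_w(0)=-\nabla_w h(u,w)$ and $\|S_w(0)\|\le\ell_{h_w}$, giving the last inequality of \eqref{eq:converse_plant}.

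I expect the remaining difficulties to be bookkeeping rather than conceptual: (i) justifying the interchange of $\partial_u,\partial_w$ with the integral and the existence and continuity of the sensitivities, which rests on the $C^1$ regularity of $f$ and $h$ together with the standard theory of smooth dependence of ODE solutions on data and parameters; and (ii) ensuring every constant is uniform in $(u,w)$, which follows from compactness of $\sbs{\mathcal{U}}{c}\times\sbs{\mathcal{W}}{c}$, the uniformity of $a,k$ in Assumption~\ref{as:stabilityPlant}, and the uniform bounds $L_x,\ell_{h_u},\ell_{h_w}$. The decomposition $h=h_u+h_w$ is what cleanly decouples the two parametric initial conditions and keeps $d_5,d_6$ separate and explicit.
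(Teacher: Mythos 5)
Your proposal is correct and follows essentially the same route as the paper: freeze $(u,w)$, invoke the converse Lyapunov theorem \cite[Thm.~4.14]{Khalil:1173048} for the first four inequalities, and obtain the parametric bounds on $\partial W/\partial u$ and $\partial W/\partial w$ from a sensitivity argument. The only difference is that you carry out that sensitivity step explicitly via the variational equation and a finite-horizon Gronwall bound, whereas the paper delegates it to \cite[Lemma~9.8]{Khalil:1173048} and \cite{Corringendum}, whose proof is precisely the argument you give.
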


\begin{proof}
We begin by noting that, under our assumptions, the vector field $f(x,u,w)$ is 
Lipschitz in 
$\mathcal{X}_r \times \sbs{\mathcal{U}}{c} \times \sbs{\mathcal{W}}{c}$, and thus
its  Jacobian $\frac{\partial f}{\partial x}$ is bounded on $\mathcal{X}_r$, 
uniformly with respect to $u$ and $w$. The proof thus follows by 
iterating the steps in~\cite[Thm.~4.14]{Khalil:1173048} for fixed 
$u \in \sbs{\mathcal{U}}{c}$ and $w \in \sbs{\mathcal{W}}{c}$, by noting that
Assumption~\ref{as:stabilityPlant} implies that solutions that start in 
$\mathcal{X}_0$ do not leave $\mathcal{X}_r$, and thus~\eqref{eq:expstability} 
holds. Then, sensitivity with respect to $u,w$ follows 
from~\cite[Lemma~9.8]{Khalil:1173048} and~\cite{Corringendum}.~
\end{proof}

In the following, we state the main optimization problem associated with~\eqref{eq:plantModel} and formalize the problem statements.  

\subsection{Target Control Problem}

In this work, we focus on the problem of controlling, at every time $t$, the 
system~\eqref{eq:plantModel} to a solution of the following time-dependent 
optimization problem:
\begin{subequations}
\label{opt:objectiveproblem}
\begin{align}
\label{opt:objectiveproblem-a}
(u_t^*, x_t^*) \in  \arg
\underset{\bar u, \bar x}{\min}  ~~ & 
\phi (\bar u) + \psi (\bar x)\\
\label{opt:objectiveproblem-b}
\text{s.t.} ~~~~ & \bar x = h(\bar u , w_t), \quad \bar u \in \mc C,
\end{align}
\end{subequations}
where $\map{\phi}{\mc U}{\real}$ and $\map{\psi}{\mc X}{\real}$ 
describe costs associated with the system's inputs and states, respectively,
and $ \mathcal{C} \subset \mc U_c$ is a closed and convex set representing 
constraints on the input at optimality.

\begin{remark}[\bf\textit{Interpretation of the control objective}]
The optimization problem \eqref{opt:objectiveproblem} formalizes an 
\emph{optimal equilibrium selection problem}, where the objective is to select
an optimal input-state pair $(u^*_t,x^*_t)$ that, at equilibrium, minimizes 
the cost specified by $\phi(\cdot)$ and $\psi(\cdot)$. 
It is worth noting that, differently from stabilization problems, where the 
objective is to guarantee that the trajectories of~\eqref{eq:plantModel} 
converge to 
\textit{some} equilibrium point, the control objective formalized by is to \textit{select, among all equilibrium points of~\eqref{eq:plantModel}, an equilibrium point that is optimal} as described by the function $\phi(u) + \psi(x)$.
In this sense, \eqref{opt:objectiveproblem} can be interpreted as a high-level control objective that 
can be nested with a stabilizing controller (where the latter is used to guarantee the satisfaction of Assumption~\ref{as:stabilityPlant}).
\vspace{-.3cm}
\QEDB\end{remark}

Two important observations are in order. First, the 
constraint~\eqref{opt:objectiveproblem-b} is parametrized by the
disturbance $w_t$, and thus the solutions 
of~\eqref{opt:objectiveproblem} are parametrized by $w_t$ (or, equivalently, 
by time). In this sense, the pairs $(u_t^*, x_t^*)$ are time-dependent and 
characterize optimal trajectories~\cite{popkov2005gradient}.
Secondly, by recalling that $w_t$ is assumed to be unknown and unmeasurable,
solutions of~\eqref{opt:objectiveproblem} cannot be computed~explicitly.

By recalling that $h(u, w)$ is unique for any fixed $u,w$, 
problem~\eqref{opt:objectiveproblem} can be rewritten as an unconstrained problem:
\begin{align}
\label{opt:objectiveproblem-2}
u_t^* \in  
\arg \underset{\bar u \in \mc C}{\min}  ~~ & 
\phi (\bar u) + \psi (h(\bar u , w_t)) \, .
\end{align}

We make the following assumptions on the costs of~\eqref{opt:objectiveproblem-2}.

\vspace{-.2cm}

\begin{assumption}[\bf \textit{Smoothness and strong convexity}]
\label{as:LipsConvex} The following conditions hold:
\begin{enumerate}
\item[(a)] The function $u \mapsto \phi(u)$ is continuously-differentiable and 
$\ell_u$-smooth, $\ell_u \geq 0$.
The function $x \mapsto \psi(x)$ is continuously-differentiable and 
$\ell_x$-smooth, $\ell_x \geq 0$.

\item[(b)] For any $\bar w \in \sbs{\mc W}{c}$ fixed, the composite cost
$u \mapsto \phi(u) + \psi(h(u,\bar w))$ is  $\mu_u$-strongly  convex, $\mu_u > 0$.
\QEDB
\end{enumerate}
\end{assumption}
\vspace{-.2cm}

It follows from Assumption~\ref{as:LipsConvex}(a) that the composite cost
$u \mapsto \phi(u) + \psi(h(u,w_t))$ is $\ell$-smooth with 
$\ell:= \ell_u+\ell_{h_u}^2 \ell_x$; it follows from Assumption~\ref{as:LipsConvex}(b) 
that the optimizer $(u^*_t,x^*_t)$ of~\eqref{opt:objectiveproblem-2} is 
unique for any $w_t \in \W$.

\vspace{-.3cm}

\begin{assumption}[\bf \textit{Regularity of optimal trajectory map}]
\label{as:mapu}
There exists a continuous function 
$J: \sbs{\mathcal{W}}{c} \to \sbs{\mathcal{U}}{c}$ such that $u^*_t = J(w_t)$. 
Moreover, there exists $\ell_J < \infty$ such that 
$\|\partial J(w_t)\| \leq \ell_J$ for all $w_t \in\sbs{\mathcal{W}}{c}$.~
\QEDB
\end{assumption}
\vspace{-.3cm}

Assumption~\ref{as:mapu} imposes regularity assumptions on the function that maps 
$w_t$ (which parametrizes the problem~\eqref{opt:objectiveproblem-2}) into the 
optimal solution $u^*_t$~\cite[Ch.~2]{dontchev2009implicit}; conditions can be obtained from  standard
arguments in parametric convex programming.

\subsection{Optimal Regulation with Perception In-the-loop}

Feedback-based optimizing controllers for 
\eqref{eq:plantModel}-\eqref{opt:objectiveproblem} were studied 
in~\cite{bianchin2021time} when~\eqref{eq:plantModel} has linear dynamics 
and in~\cite{hauswirth2020timescale} when~\eqref{opt:objectiveproblem} is 
unconstrained and $w_t$ is constant. The authors consider low-gain gradient-type 
controllers of the form:
\begin{align}
\label{eq:ideal_controller}
\dot u  = \proj \left\{u-\eta \left(\nabla \phi(u) 
+ H(u)^\top  \nabla \psi (x) \right) \right\}-u,
\end{align}
where $H(u)$ denotes the Jacobian of $h_u(u)$ and  $\eta > 0$ is a tunable 
controller parameter.
The controller~\eqref{eq:ideal_controller} is of the form of a projected 
gradient-flow algorithm, often adopted to solve problems of the 
form~\eqref{opt:objectiveproblem}, yet modified by replacing the true gradient
$\nabla \psi (h(u,w_t))$ with the gradient $\nabla \psi (x)$ evaluated at the instantaneous 
system state, thus making the iteration~\eqref{eq:ideal_controller} independent of the 
unknown disturbance $w_t$. 

Implementations of the controller~\eqref{eq:ideal_controller} critically rely on
the exact knowledge of the system state $x$ as well as of the gradients
$\nabla \phi(u)$ and $\nabla \psi (x)$.
In this work, we consider two scenarios. 
In the first, the controller is used with an estimate $\hat x$ of $x$ provided by a deep 
neural network. More precisely, we focus on cases where $x$ is not directly measurable, 
instead, we have access only to nonlinear and possibly high-dimensional observations of the 
state  $\xi = q(x)$, where $q: \mathcal{X} \rightarrow \real^{n_\xi}$ is an \emph{unknown} map. 
In the second case, the controller is used with estimates of the gradients 
$\nabla \phi(u)$,  $\nabla \psi (x)$, obtained by using a deep neural network.
More precisely, we consider cases where the analytic expressions of the gradients are 
unknown, instead, we have access only to functional evaluations $\{u_i, \phi(u_i)\}$,  
$\{x_i, \psi(x_i)\}$ of the cost functions. 
We formalize these two cases next.

\vspace{-.3cm}

\begin{problem}[\textbf{\textit{Optimization with state perception}}]
\label{pro:perception}
Design a feedback controller to regulate inputs and states 
of~\eqref{eq:plantModel} to the time-varying solution 
of~\eqref{opt:objectiveproblem} when $x$ is unmeasurable and, instead, we have 
access only to state estimates $\hat{x} = \hat p(\xi)$ produced by a deep neural 
network $\hat p(\cdot)$ trained as a state observer.
\QEDB
\vspace{-.3cm}
\end{problem}

\begin{problem}[\textbf{\textit{Optimization with cost perception}}]
\label{pro:cost}
Design a feedback controller to regulate inputs and states 
of~\eqref{eq:plantModel} to the time-varying solution 
of~\eqref{opt:objectiveproblem} when $\nabla \phi(u)$, $\nabla \psi (x)$ are 
unknown and, instead, we have access only to estimates $\hat \phi(u)$, 
$\hat \psi (x)$ of $\phi(u)$, $\psi (x)$ produced by a deep neural~network trained
as a function estimator.
\QEDB\end{problem}

\vspace{-0.2cm} 

We conclude by discussing in the following remarks the relevance of 
Problems~\ref{pro:perception}-\ref{pro:cost} in the applications.

\vspace{-0.2cm} 

\begin{remark}[\bf\textit{Motivating applications for Problem~\ref{pro:perception}}]
In applications in autonomous driving, vehicles states are often reconstructed from 
perception-based maps $\xi = q(x)$ where $q$ describes images generated by cameras.
In a power systems context, $\xi=q(x)$ describes the highly-nonlinear power flow 
equations describing the relationships between net powers and voltages 
at the buses (described by $\xi$) and generators' phase angles and frequencies 
(described by $x$).
Finally, we note that a related observer design problem was considered in~\cite{Marchi2022}.
\vspace{-.3cm}
\QEDB\end{remark}

\begin{remark}[\bf\textit{Motivating applications for Problem~\ref{pro:cost}}]
When systems interact with humans, $\phi(u)$ is often used to model end-users' 
perception regarding safety, comfort, or (dis)satisfaction of the adopted 
control policy~\cite{simonetto2021personalized, notarnicola2021distributed,fabiani2021learning,luo2020socially,cothren2021data}. 
Due to the complexity of modeling humans, $\phi(u)$ is often unknown and learned 
from available historical data.
In robotic trajectory tracking problems, $\psi(x) = \|x - x^{r}\|^2$ where 
$x^{r} \in \mathbb{R}^p$ models an unknown target to be tracked. In these cases, we have 
access only to measurements of the relative distance $\|x - x^{r}\|^2$ between the robot the 
target.
Additional examples include cases where $\psi(x)$ represents a barrier function 
associated with unknown sets~\cite{robey2020learning,taylor2020learning}. 
\QEDB \end{remark}

\section{General Analysis of Gradient-Flow Controllers with Gradient Error}
\label{sec:error_gradient}

In this section, we take a holistic approach to address 
Problems~\ref{pro:perception}-\ref{pro:cost} and we provide a general result characterizing
gradient-type controllers of the form~\eqref{eq:ideal_controller} that operate with general 
errors. More precisely, in this section we study the following plant-controller 
interconnection:
\begin{subequations}
\label{eq:closedloop_error}
\begin{align}
\dot x & =  f(x, u, w_t),  \\
\dot u & = \proj \left\{u-\eta \left( F(x,u) + e(x,u) \right) \right\}-u, 
\label{eq:closedloop_error_b}
\end{align}
\end{subequations}
with $x(t_0)=x_0$ and $u(t_0)=u_0$, where 
$F(x,u) := \nabla \phi(u) + H(u)^\top  \nabla \psi (x)  $ is the nominal 
gradient as in~\eqref{eq:ideal_controller}, and 
$\map{e}{\mc X \times \mc U}{\R^{n_u}}$ models any state- or 
input-dependent~error.

It is worth noting three important features of the controller~\eqref{eq:closedloop_error_b}.
First, \eqref{eq:closedloop_error_b} can be implemented without knowledge of $w_t$
(similarly to~\eqref{eq:ideal_controller}, the true gradient $\nabla \psi (h(u,w_t))$ 
is replaced by evaluations of the gradient at the instantaneous state $\nabla \psi (x)$). 
Second, since the vector field in~ \eqref{eq:closedloop_error_b} is Lipschitz-continuous,
for any $(x_0,u_0)$ the initial value problem~\eqref{eq:closedloop_error} admits a unique 
solution that is continuously 
differentiable~\cite[Lemma 3.2]{bianchin2021time},\cite{YSX-JW:00}.
Third, the set $\mathcal{C}$ is attractive and forward-invariant for the 
dynamics~\eqref{eq:closedloop_error_b}, namely, if $u(t_0) \not \in \mc U$, then $u(t)$  
approaches $\mc C$ exponentially, and if $u(t_0) \in \mc C$, then $u(t) \in \mc U$ for all 
$t\geq t_0$~\cite[Lemma 3.3]{bianchin2021time}.

To state our results, we let $z:=(x - x_t^*, u - u_t^*)$ be the tracking error 
between the state of~\eqref{eq:closedloop_error} and  the optimizer 
of~\eqref{opt:objectiveproblem}. Moreover, for fixed $s \in (0,1)$, define:
\begin{align}
\label{eq:uglyConstants}
c_0 &:= \min \{s \mu \eta, s \frac{d_3}{d_2} \}, &
c_1 &:= \frac{1}{\eta}\min\left\{\frac{(1-\theta)}{2}, \theta d_1 \right\}, \nonumber \\
c_2 &:= \frac{1}{\eta} \max\left\{ \frac{(1-\theta)}{2}, \theta d_2  \right\}, &
c_3 &:= \sqrt{2} c_1^{-1}, \nonumber \\    
c_4 & := \sqrt{2 \eta} \max\{1,d_1^{-\frac{1}{2}}\}, &
c_5 &:= \frac{\sqrt{2}}{\sqrt{\eta}}\ell_J + \frac{d_4 \ell_{h_w}}{\sqrt{\eta}\sqrt{d_1}},
\end{align}
and $\theta = (1+d_4+\frac{d5}{\ell \ell_{h_u}})^\inv$, where we recall that 
$(d_1,d_2,d_3,d_4,d_5)$ are as in Lemma~\ref{lem:converse_plant}.

\vspace{-.2cm}
\begin{theorem}[\bf \textit{Transient bound for gradient flows with error}]
\label{thm:General_stability}
\vspace{-.8cm}

\noindent
Consider the closed-loop system~\eqref{eq:closedloop_error} and let Assumptions \ref{as:steadyStateMap}-\ref{as:mapu} be satisfied. Suppose that, for any $x \in \mathcal{X}_0$ and $u \in \mathcal{U}$, the gradient error satisfies the condition 
\begin{align}\label{eq:errorcondition}
\|e(x,u)\| \leq \gamma \|z\| + \delta,
\end{align}
for some $\delta > 0$ and $\gamma \in [0, c_0/c_3)$.  If $\eta \in (0,\eta^*)$,
with 
\begin{align}\label{eq:ControllerGain}
\eta^* := \min \left\{ \frac{2 \mu}{\ell^2}, \frac{(1 - s)^2 d_3 \mu}{
\ell_{h_u}(d_4 \ell_{h_u} +d_5)((1-s)\mu \ell_y + 2 \ell^2)} \right\} ,
\end{align}
then, the tracking error satisfies
\begin{align}\label{eq:genErrorBound}
    \|z(t)\| \leq & \kappa_1 e^{-\frac{\alpha}{2}(t - t_0)} \|z(t_0)\| 
    + \kappa_2\textrm{ess sup}_{t_0 \leq \tau \leq t} \|\dot w_{\tau}\| 
    + \kappa_3  \delta,
\end{align}
for all $t \geq t_0$, where $\alpha = c_0 - \gamma c_3$ and
\begin{align*}
\kappa_1 &= \sqrt{\frac{c_2}{c_1}}, &
\kappa_2 &= \frac{c_5}{c_0 \sqrt{c_1}}, & 
\kappa_3 &= \frac{c_4}{c_0 \sqrt{c_1}},    
\end{align*}
for any  $x(t_0) \in \mathcal{D}_0  := \sbs{\mathcal{X}}{eq} + \mathcal{B}_n(r')$ where $r'$ is such that $0 < r' < r_o/(\sqrt{2} \kappa_1) - \sqrt{2} \kappa_3 \delta -  \sqrt{2} \kappa_2 \textrm{ess sup}_{\tau \geq t_0} \|\dot w_{\tau}\| - \sqrt{2} \kappa_1 \textrm{diam}(\mathcal{U})$, and for any $u(t_0) \in \mathcal{U}$.
\QEDB \end{theorem}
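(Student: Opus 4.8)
The plan is to treat the closed loop~\eqref{eq:closedloop_error} as a singularly perturbed (two-time-scale) interconnection and to certify the bound~\eqref{eq:genErrorBound} through a single composite Lyapunov function. First I would change coordinates to the boundary-layer (fast) error $y := x - h(u,w_t)$ and the optimization (slow) error $\tilde u := u - u_t^*$. Since Assumption~\ref{as:steadyStateMap} gives $h(u,w)=h_u(u)+h_w(w)$ with $h_u$ Lipschitz of constant $\ell_{h_u}$, one has $x - x_t^* = y + \left(h_u(u)-h_u(u_t^*)\right)$, so $\|z\|$ and $\sqrt{\|y\|^2+\|\tilde u\|^2}$ are equivalent up to constants depending only on $\ell_{h_u}$. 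This lets me carry out the estimates in the $(y,\tilde u)$ coordinates, where the converse Lyapunov function of Lemma~\ref{lem:converse_plant} applies, and translate back to $\|z\|$ at the end.

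As a composite certificate I would take
$$ V(x,u,t) = \frac{1}{\eta}\Big(\theta\, W(x,u,w_t) + \tfrac{1-\theta}{2}\,\|u - u_t^*\|^2\Big), $$
with $W$ the function from Lemma~\ref{lem:converse_plant} and $\theta = (1+d_4+d_5/(\ell\ell_{h_u}))^{-1}$ as in the statement. The quadratic sandwich $d_1\|y\|^2 \le W \le d_2\|y\|^2$ together with the norm equivalence above yields exactly $c_1\|z\|^2 \le V \le c_2\|z\|^2$, with $c_1,c_2$ as in~\eqref{eq:uglyConstants}; this is the bound needed to pass from a decay estimate on $V$ back to one on $\|z\|$.

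Next I would differentiate $V$ along~\eqref{eq:closedloop_error}. The plant part contributes $\theta\,\frac{\partial W}{\partial x}f \le -\theta d_3\|y\|^2$, the decay in the fast variable, plus cross terms $\frac{\partial W}{\partial u}\dot u$ and $\frac{\partial W}{\partial w}\dot w_t$ controlled by the $d_5,d_6$ inequalities. For the controller part I would use the fixed-point characterization of $u_t^*$ under $\proj$, the non-expansiveness of the projection, and the $\mu_u$-strong monotonicity together with $\ell$-smoothness of the composite gradient to get a contraction $\langle u-u_t^*,\dot u\rangle \le -\mu_u\eta\|\tilde u\|^2 + (\text{coupling})$; the coupling to $y$ arises because the controller evaluates $\nabla\psi(x)$ rather than $\nabla\psi(h(u,w_t))$, and $\|\nabla\psi(x)-\nabla\psi(h(u,w_t))\| \le \ell_x\|y\|$. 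The terms $\dot u_t^* = \partial J(w_t)\dot w_t$ (bounded by $\ell_J\|\dot w_t\|$ via Assumption~\ref{as:mapu}) and $\frac{\partial W}{\partial w}\dot w_t$ produce the $\|\dot w\|$ contribution, while the gradient error is split via $\|e\|\le\gamma\|z\|+\delta$: the $\gamma\|z\|$ part is absorbed into the rate and the $\delta$ part becomes a persistent forcing.

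The crux, and the main obstacle, is to show that the aggregate of the indefinite cross terms is dominated by the two negative diagonal terms $-\theta d_3\|y\|^2$ and $-\mu_u\eta\|\tilde u\|^2$, so that $\dot V \le -c_0 V + (\text{forcing})$. This is precisely a singular-perturbation balance: the choice of $\theta$ and the gain bound $\eta<\eta^*$ from~\eqref{eq:ControllerGain} are engineered so that the off-diagonal entries of the resulting $2\times 2$ quadratic form in $(\|y\|,\|\tilde u\|)$ stay dominated, keeping the form negative definite. Absorbing the $\gamma\|z\|$ term replaces the rate $c_0$ by $\alpha = c_0-\gamma c_3$ (whence the requirement $\gamma < c_0/c_3$), and bounding the forcing by $\|z\|\le\sqrt{V/c_1}$ gives $\frac{d}{dt}\sqrt{V} \le -\tfrac{\alpha}{2}\sqrt{V} + \tfrac{1}{2\sqrt{c_1}}\left(c_4\delta + c_5\|\dot w_t\|\right)$, which explains the $e^{-\alpha(t-t_0)/2}$ rate. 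The comparison lemma, the sandwich $c_1\|z\|^2\le V\le c_2\|z\|^2$, and $\sqrt{a+b+c}\le\sqrt a+\sqrt b+\sqrt c$ then produce~\eqref{eq:genErrorBound} with $\kappa_1=\sqrt{c_2/c_1}$, $\kappa_2=c_5/(c_0\sqrt{c_1})$, $\kappa_3=c_4/(c_0\sqrt{c_1})$. I would close with a forward-invariance check: the stated condition on $r'$ defining $\mathcal{D}_0$ ensures that the right-hand side of~\eqref{eq:genErrorBound} keeps $x(t)$ inside $\mathcal{X}_r$ for all $t\ge t_0$, so that the converse-Lyapunov estimates used throughout remain valid along the entire trajectory.
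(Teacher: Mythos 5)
Your proposal follows essentially the same route as the paper's proof: the same shift $\tilde x = x - h(u,w_t)$, the same composite Lyapunov function $\frac{1}{\eta}\bigl(\theta W + \tfrac{1-\theta}{2}\|u-u_t^*\|^2\bigr)$ with the same $\theta$, the same projection/strong-monotonicity estimate for the controller block, the same $2\times 2$ quadratic-form balance fixing $\theta$ and $\eta^*$, and the same comparison-lemma argument on $\sqrt{\nu}$. The only (welcome) difference is that you make explicit the norm equivalence between $\|z\|$ and the shifted coordinates via $x-x_t^* = \tilde x + h_u(u)-h_u(u_t^*)$, a step the paper leaves implicit.
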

\vspace{-.2cm}
The proof of this claim is postponed to the Appendix.

Theorem~\ref{thm:General_stability} asserts that if the worst-case estimation error $e(x,u)$ 
is bounded by a term $\gamma \|z\|$ that vanishes at the optimizer and a by a nonvanishing 
but constant term $\delta$, then a sufficiently-small choice of the gain $\eta$ 
guarantees exponential convergence of the tracking error to a neighborhood of zero. 
More precisely, the tracking error $z$ is ultimately bounded by two terms: the first 
$\kappa_2\textrm{ess sup}_{t_0 \leq \tau \leq t} \|\dot w_{\tau}\|$ accounts for the effects 
of  the time-variability of $w_t$ on the optimizer $(u_t^*,x_t^*)$, and the second 
$\kappa_3 \delta$ accounts for the effects of a nonvanishing error in the utilized gradient 
function.
It follows that the bound \eqref{eq:genErrorBound} guarantees 
input-to state stability (ISS) of \eqref{eq:closedloop_error} (in the sense
of~\cite{sontag1997output,angeli2003input,sontag2021remarks}) with respect to 
$\norm{\dot w_t}$ and~$\delta$.

\section{Optimization with Neural Network State~Perception}
\label{sec:statePerception}
In this section, we propose an algorithm to address 
Problem~\ref{pro:perception}, and we tailor the conclusions drawn in  
Theorem~\ref{thm:General_stability} to characterize the performance of the 
proposed algorithm.

\subsection{Algorithm Description}
\label{sec:unknownobservation}

To produce estimates of the system state $\hat x = p(\xi)$, we assume that a set
of training points $\{(\xi^{(i)}, x^{(i)})\}_{i = 1}^N$ is utilized to train a 
neural network via empirical risk minimization. 
More precisely, in the remainder, we will study two types of neural networks that
can be used for this purpose: (i) feedforward neural networks, and (ii) residual
neural networks\footnote{We refer the reader to the representative papers~\cite{hornik1989multilayer,Tabuada-pmlr-v144-marchi21a} for an overview of feedforward and residual networks.
Briefly, a neural network consists of inputs, various hidden layers, activation functions, and output layers, and can be trained for, e.g., functional estimation and classification.  When the  layers are sequential and the architecture is described by a directed acyclic graph, the underlying network is called \textit{feedforward}; when some of these layers are bypassed, then the underlying network is called \textit{residual}.}.
We thus propose to train a neural network to produce a map 
$\hat{x} = \hat p(\xi)$ that yields estimates of the system state given 
nonlinear and high-dimensional observations $\xi$. Accordingly, we modify the 
controller~\eqref{eq:ideal_controller} to operate with estimates of the system 
state $\hat x$ produced by the neural network. The proposed framework is described  in Algorithm~\ref{alg:perception_cost} and illustrated in Figure~\ref{fig:system_1}. 
\begin{algorithm}
\caption{Optimization with NN State Perception}

\label{alg:perception_cost}
\# \textbf{Training}

\quad Given: training set $\{(x^{(i)}, \xi^{(i)})\}_{i = 1}^N$

\quad Obtain: $\hat p \gets \operatorname{NN-learning}(\{(x^{(i)}, \xi^{(i))}\}_{i = 1}^N)$ 

\vspace{.2cm}

\# \textbf{Gradient-based Feedback Control}

\quad Given: set $\mathcal{U}$, funct.s $\nabla \phi, \nabla \psi, H(u)$, 
neural net $\hat p$, gain~$\eta$

\quad Initial conditions: $x(t_0) \in \mathcal{X}_0$, $u(t_0) \in \mathcal{U}$

\quad For $t \geq t_0$:
\begin{subequations}
\label{eq:closedloop_perception}
\begin{align}
 \dot x & =  f(x, u, w_t)  \\
      \xi & = q(x) \\
      \dot u & = \proj \hspace{-.1cm} \left\{u-\eta \left(\nabla \phi(u) + H(u)^\top  \nabla \psi (\hat{p}(\xi)) \right) \right\}-u \hspace{-.2cm} \label{eq:closedloop_perception_c}
\end{align}
\end{subequations}

\end{algorithm}

\begin{figure}[h!]
\centering \includegraphics[width=.9\columnwidth]{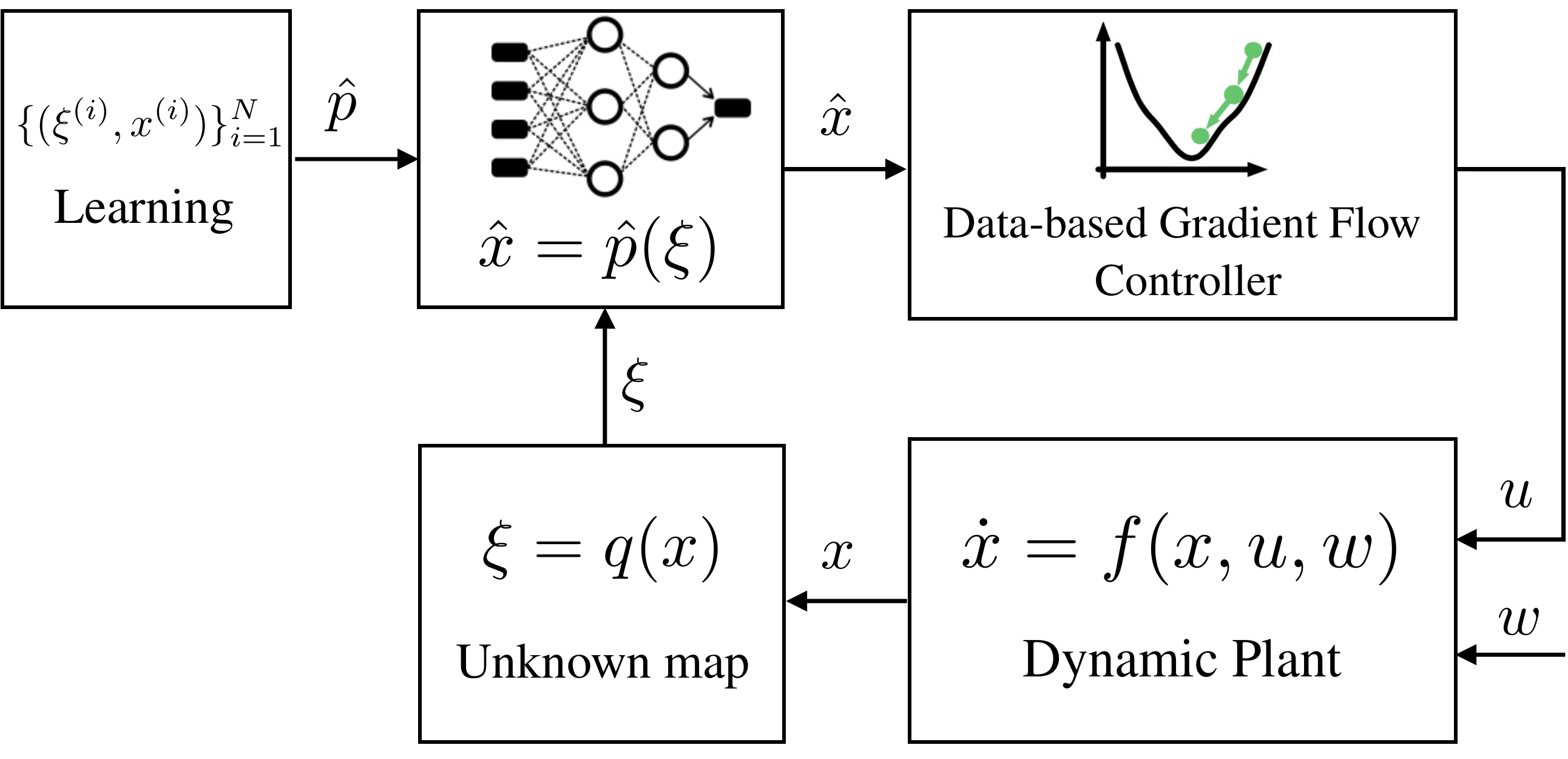}
\caption{Schematics of the control framework proposed in 
Algorithm~\ref{alg:perception_cost}. In this setting, the state 
$x$ of the system is not measurable directly, instead, only 
high-dimensional estimations (e.g., camera images) $\xi = q(x)$ 
are available.  A deep neural network is then used to compute 
state estimates $\hat{x} = \hat p(\xi)$, which are used to feed a gradient-based feedback controller).}
\label{fig:system_1}
\end{figure}

In the training phase of Algorithm~\ref{alg:perception_cost}, the map 
$\operatorname{NN-learning}(\cdot)$ denotes a generic training procedure for 
the neural network via empirical risk minimization. 
The output of the training phase is the neural network mapping $\hat p(\cdot)$. 
In the feedback control phase, the map $\hat p(\cdot)$ is then used to produce 
estimates of the state of the dynamical system $\hat x = \hat p(\xi)$ in order 
to evaluate the gradient functions.
Notice that, relative to the nominal controller~\eqref{eq:ideal_controller}, 
\eqref{eq:closedloop_perception_c} leverages a gradient that is evaluated at an 
approximate point $\hat x$, and thus fits the more general 
model~\eqref{eq:closedloop_error_b}.

\subsection{Analysis of Algorithm~\ref{alg:perception_cost}}

In what follows, we analyze the tracking properties of 
Algorithm~\ref{alg:perception_cost}.
To this end, we introduce the following.

\begin{assumption}[\bf\textit{Generative and Perception Maps}]
\label{as:perceptionmap}
The generative map $x \mapsto q(x)=\xi$ is such that, for any compact set $\mathcal{X}' \subseteq \mathcal{X}_r$,  the image $q(\mathcal{X}')$ is compact.
Moreover, there exists a continuous $p: \real^{n_\xi} \to \real^n$ such that 
$p(\xi) = x$ for any $x \in \mathcal{X}_r$, where $\xi = q(x)$.
\QEDB \end{assumption}

\vspace{-.3cm}

\begin{remark}[\bf\textit{Relationship with System Observability}]
We note that a standard approach in the literature for the state observer design problem is to leverage the concept of $\ell$-observability, where the state is estimated based on $\ell +1$ samples of $\xi$ and $\ell$ samples of the inputs $u, w$~\cite{Marchi2022}. However, we note that in our setup we do not have access to measurements of the exogenous input $w_t$; therefore, we rely on an approach similar to~\cite{dean2020robust, dean2021certainty} where $x$ is estimated from the observation $\xi$.    
\QEDB\end{remark}
\vspace{-.2cm}

To guarantee that network training is well-posed, we assume that the 
$N$ training points $\{x^{(i)}\}_{i = 1}^N$ for the state are drawn from a 
compact set $\mathcal{X}_{\textrm{train}} := \sbs{\mathcal{X}}{eq} + \mathcal{B}_n[r_\textrm{train}]$, where $r_\textrm{train}$ is such that  
$r_0 \leq r_\textrm{train} < r$. 
Moreover, we let
$\mathcal{Q}_\textrm{train} := q(\mathcal{X}_\textrm{train})$ 
denote the perception set associated with the training  set 
$\mathcal{X}_\textrm{train}$, and we denote by 
$\mathcal{Q}_{\textrm{train},s} := \{\xi^{(i)} = q(x^{(i)}), i = 1, \ldots, N\} \subset \mathcal{Q}_{\textrm{train}}$ the set of available perception samples.
Notice that the set $\mathcal{Q}_\textrm{train}$ is compact by
Assumption~\ref{as:perceptionmap}. Compactess of $\mathcal{Q}_\textrm{train}$ 
will allow us to build on the results of~\cite{hornik1989multilayer} 
and~\cite{Marchi2022,tabuada2020universal} to bound the perception error 
$\|p(\xi) - \hat{p}(\xi)\|$ on the  compact set $\mathcal{Q}_\textrm{train}$.
With this background, we let $\sup_{\xi \in \mathcal{Q}_{\textrm{train,s}}} \|p(\xi) - \hat{p}(\xi)\|_\infty$ denote the supremum norm of the approximation 
error over $\mathcal{Q}_{\textrm{train,s}}$.  

\vspace{-.2cm}
\begin{remark}[\bf\textit{Properties of the Training Set}]
Notice that the set of training data $\mathcal{X}_{\textrm{train}}$ is assumed 
to contain the set of initial conditions $\mathcal{X}_0$. This allows us to 
guarantee that the neural network can be trained over the domain of definition 
of  the Lyapunov function $W$ in Lemma~\ref{lem:converse_plant} 
(see Figure~\ref{fig:sets_illustration} for an illustration).
If, by contrast, the set $\mathcal{X}_{\textrm{train}}$ were contained in  
$\mathcal{X}_0$, then the set of initial of~\eqref{eq:closedloop_perception} 
must be modified so that the trajectories do not leave the set 
$\mathcal{X}_{\textrm{train}}$.  
\QEDB\end{remark}
\vspace{-.2cm}

We begin by characterizing the performance of~\eqref{eq:closedloop_perception} 
when residual networks are utilized to reconstruct the system state. For simplicity of exposition, we outline the main result for the 
case where $n = n_{\xi}$, and we then discuss how to consider the case 
$n < n_{\xi}$ in Remark~\ref{rem:highDimensionSpace}.

\vspace{-.2cm}
\begin{proposition}{\bf \textit{(Transient Performance of 
Algorithm~\ref{alg:perception_cost} with Residual Neural Network)}}
\label{prop:General_perceptionstate}
Consider the closed-loop system~\eqref{eq:closedloop_perception}, let 
Assumptions \ref{as:steadyStateMap}-\ref{as:perceptionmap} be satisfied, and 
assume $n = n_{\xi}$.
Assume that the training set $\mathcal{Q}_{\textrm{train},s}$ is a 
$\varrho$-cover of $\mathcal{Q}_{\textrm{train}}$ with respect to the partial 
order $\preceq$, for some $\varrho > 0$. 
Let $p_{\textrm{resNet}}: \R^{n_\xi} \to R^{n_\xi}$ describe a residual network,
and  assume that it  can be decomposed as $p_{\textrm{resNet}} = m + A$, where 
$m: \R^{n_\xi} \to \R^{n_\xi}$ is monotone and  $A: \R^{n_\xi} \to \R^{n_\xi}$ 
is a linear function. 
If Algorithm~\ref{alg:perception_cost} is implemented with 
$\hat p =  p_\textrm{resNet}$ and $\eta \in (0,\eta^*)$, then the error $z(t) = (x - x^*_t, u - u^*_t)$ of~\eqref{eq:closedloop_perception} satisfies~\eqref{eq:genErrorBound} with $\kappa_1, \kappa_2, \kappa_3$ as in Theorem~\ref{thm:General_stability}, $\gamma = 0$, and 
\begin{align}
\delta & =  \ell_{h_u} \ell_x \sqrt{n_\xi} 
\Big(3  \sup_{\xi \in \mathcal{Q}_{\textrm{train},s}} \| p(\xi) - p_\textrm{resNet}(\xi)\|_\infty \nonumber \\
    & \hspace{3.9cm}+ 2 \, \omega_p(\varrho) + 2  \|A\|_\infty \varrho
 \Big) \label{eq:error_perception}
\end{align}
where $\omega_p$ is a modulus of continuity of $p$ on $\mathcal{Q}_\textrm{train}$. 
\QEDB
\vspace{-.3cm}
\end{proposition}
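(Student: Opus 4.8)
The plan is to recognize that Algorithm~\ref{alg:perception_cost} is a special case of the general perturbed interconnection~\eqref{eq:closedloop_error}, to identify the induced gradient error explicitly, and then to reduce the statement to a direct application of Theorem~\ref{thm:General_stability}. Comparing the controller~\eqref{eq:closedloop_perception_c} with the nominal gradient $F(x,u)=\nabla\phi(u)+H(u)^\top\nabla\psi(x)$, the only discrepancy is that $\nabla\psi$ is evaluated at the network estimate $\hat p(\xi)=p_{\textrm{resNet}}(q(x))$ instead of at the true state $x$. Hence the error is
\begin{align*}
e(x,u)=H(u)^\top\big(\nabla\psi(p_{\textrm{resNet}}(q(x)))-\nabla\psi(x)\big),
\end{align*}
and it remains to show that $\|e(x,u)\|$ admits a uniform \emph{constant} bound, so that condition~\eqref{eq:errorcondition} holds with $\gamma=0$ and $\delta$ as in~\eqref{eq:error_perception}.

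First I would write $\|e(x,u)\|\le\|H(u)\|\,\|\nabla\psi(p_{\textrm{resNet}}(\xi))-\nabla\psi(x)\|$, use $\|H(u)\|\le\ell_{h_u}$ (Lipschitz continuity of $h_u$) together with the $\ell_x$-smoothness of $\psi$ from Assumption~\ref{as:LipsConvex}(a) to obtain $\|e(x,u)\|\le\ell_{h_u}\ell_x\,\|p_{\textrm{resNet}}(\xi)-x\|$, and then invoke Assumption~\ref{as:perceptionmap}, which yields $x=p(\xi)$ for $x\in\mathcal{X}_r$, giving
\begin{align*}
\|e(x,u)\|\le\ell_{h_u}\ell_x\,\|p_{\textrm{resNet}}(\xi)-p(\xi)\|.
\end{align*}
Passing to the supremum norm via $\|v\|\le\sqrt{n_\xi}\,\|v\|_\infty$ (here $n=n_\xi$) accounts for the $\sqrt{n_\xi}$ prefactor in~\eqref{eq:error_perception}. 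Since $\mathcal{X}_0\subseteq\mathcal{X}_{\textrm{train}}$, the argument $\xi=q(x)$ ranges over the compact set $\mathcal{Q}_{\textrm{train}}$, so it suffices to bound $\|p-p_{\textrm{resNet}}\|_\infty$ uniformly on $\mathcal{Q}_{\textrm{train}}$.

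The crux, and the step I expect to be the main obstacle, is upgrading the approximation error known only at the finite sample set $\mathcal{Q}_{\textrm{train},s}$ to a bound valid on the whole continuum $\mathcal{Q}_{\textrm{train}}$. This is precisely where the residual structure enters: writing $p_{\textrm{resNet}}=m+A$ with $m$ monotone and $A$ linear, and exploiting that $\mathcal{Q}_{\textrm{train},s}$ is a $\varrho$-cover of $\mathcal{Q}_{\textrm{train}}$ with respect to the partial order $\preceq$. For an arbitrary $\xi\in\mathcal{Q}_{\textrm{train}}$ one selects samples $w,z\in\mathcal{Q}_{\textrm{train},s}$ with $w\preceq\xi\preceq z$ and $\|w-z\|_\infty\le\varrho$; monotonicity of $m$ then sandwiches $m(\xi)$ between $m(w)$ and $m(z)$, so that $p_{\textrm{resNet}}(\xi)$ is controlled by its values at the two samples. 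Combining this sandwich with the sample-wise error $\sup_{\xi\in\mathcal{Q}_{\textrm{train},s}}\|p(\xi)-p_{\textrm{resNet}}(\xi)\|_\infty$, the modulus of continuity $\omega_p$ of $p$ (to control the variation of $p$ across an interval of width $\varrho$), and the linear term $\|A\|_\infty\varrho$ (to control $\|A(\xi-w)\|_\infty$ and $\|A(\xi-z)\|_\infty$) produces the three contributions inside the parentheses of~\eqref{eq:error_perception}, in the form established in~\cite{Tabuada-pmlr-v144-marchi21a}. The resulting bound on $\|e\|$ is independent of $z$, hence~\eqref{eq:errorcondition} holds with $\gamma=0$ and $\delta$ equal to~\eqref{eq:error_perception}; Theorem~\ref{thm:General_stability} then delivers~\eqref{eq:genErrorBound} with the stated constants, completing the argument.
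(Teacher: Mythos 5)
Your proposal is correct and follows essentially the same route as the paper: cast \eqref{eq:closedloop_perception_c} as an instance of \eqref{eq:closedloop_error_b}, bound the induced error by $\ell_{h_u}\ell_x\|p(\xi)-p_{\textrm{resNet}}(\xi)\|$ via the Lipschitz constants and Assumption~\ref{as:perceptionmap}, pass to the supremum norm with the $\sqrt{n_\xi}$ factor, upgrade the sample-wise error to a uniform bound on $\mathcal{Q}_{\textrm{train}}$ via the $\varrho$-cover and the $m+A$ decomposition (the result of~\cite{Tabuada-pmlr-v144-marchi21a}), and conclude with Theorem~\ref{thm:General_stability} using $\gamma=0$. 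Your sketch of the monotone-sandwich argument is a slightly more explicit rendering of the covering lemma that the paper simply cites, but the structure of the proof is identical.
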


\begin{proof}
Start by noticing that~\eqref{eq:closedloop_perception_c} can be written in the 
generic form~\eqref{eq:closedloop_error_b} with the error $e(x,u)$ given by 
$e(u,x)  =  H(u)^\top  \nabla \psi(x) -  H(u)^\top \nabla \psi (\hat{x})  =  H(u)^\top  \nabla \psi(p(\xi)) -  H(u)^\top \nabla \psi (\hat{p}(\xi))$, by 
simply adding and subtracting  the true gradient $H(u)^\top  \nabla \psi(x)$ 
in~\eqref{eq:closedloop_perception_c}.  A uniform bound on the norm of $e(u,x)$ 
over the compact set $\mathcal{Q}_\textrm{train}$ is given by:
\begin{align}
\label{eq:statePerceptionResidualNet}
\|e(u,x)\| & \leq  \ell_{h_u} \big[\nabla \psi(p(\xi)) - \nabla \psi  (\hat{p}(\xi)) \big] \nonumber \\ 
& \leq \ell_{h_u} \ell_x  \| p(\xi) - p_{\textrm{resNet}}(\xi)\| \nonumber \\
&  \leq   \ell_{h_u} \ell_x  \sup_{\xi \in \mathcal{Q}_\textrm{train}} \|p(\xi) - p_{\textrm{resNet}}(\xi)\| \, ,
\end{align}
where we have used Assumption~\ref{as:steadyStateMap} and the fact that the norm of the Jacobian of $h$ is bounded over the compact set $\mathcal{U}$. Next, notice first that $\|p(\xi) - \hat{p}(\xi)\| \leq \sqrt{n_\xi} \| p(\xi) - p_\textrm{resNet}(\xi)\|_\infty$. Since $\mathcal{Q}_{\textrm{train},s}$ is a  $\varrho$-cover of $\mathcal{Q}_{d}$ with respect to the partial order $\preceq$, for some $\varrho > 0$, and $p_{\textrm{resNet}} = m + A$, the infimum norm of the estimation  error can be upper bounded as $\sup_{\xi \in \mathcal{Q}_\textrm{train}} \| p(\xi) - p_\textrm{resNet}(\xi)\|_\infty \leq 3 \sup_{\xi \in \mathcal{Q}_{\textrm{train},s}} \| p(\xi) - p_\textrm{resNet}(\xi)\|_\infty + 2 \, \omega_p(\varrho) + 2 \|A_p\|_\infty$ as shown in~\cite[Theorem~7]{Tabuada-pmlr-v144-marchi21a}. The result then follows from Theorem~\ref{thm:General_stability} by setting $\gamma = 0$ and $\delta$ as in \eqref{eq:error_perception}.
\end{proof}

Proposition~\ref{prop:General_perceptionstate} shows that the control method 
in Algorithm~\ref{alg:perception_cost} guarantees convergence to the optimizer 
of~\eqref{opt:objectiveproblem}, up to an error that depends only on 
the uniform approximation error of the adopted neural network.
Notice that 
$\sup_{\xi \in \mathcal{Q}_{\textrm{train},s}} \| p(\xi) - p_\textrm{resNet}(\xi)\|_\infty$ is a constant that denotes the worse-case
approximation error of the training data over the compact set 
$ \mathcal{Q}_{\textrm{train},s}$. 
More precisely, the result characterizes the role of the approximation errors 
due to the use of a neural network in the transient and asymptotic performance 
of the interconnected system~\eqref{eq:closedloop_perception}.

\vspace{-.2cm}

\begin{remark}[\bf \textit{Perception in High-Dimensional Spaces}]
\label{rem:highDimensionSpace}
When $n_\xi > n$, one can consider training a neural network to approximate a 
lifted map $\tilde p: \R^{n_\xi} \to \R^{n_\xi}$ defined as 
$\tilde p = \iota \circ p$, where $\iota: \R^n \to \R^{n_\xi}$ is the injection:
$\iota(x) = (x_1, \ldots, x_n, 0, \ldots, 0)$ for any $x \in \R^n$.
From Assumption~\ref{as:perceptionmap}, it follows that 
$\tilde p(\cdot)$ is such that $\tilde p(\xi) = (x, 0_{q-n} )$ for any $x \in \mathcal{X}_r$, with $\xi$ an observation generated by the map $q(\cdot)$. 

In this case, we use the training set 
$\{(x^{(i)}, 0_{q-n}), \xi^{(i)}\}_{i = 1}^N$ to train the neural network 
implementing a map $p_{\textrm{resNet}}: \R^{n_\xi} \to \R^{n_\xi}$~\cite{Marchi2022}. 
Subsequently, the perception map $\hat p$ that will be used in~\eqref{eq:closedloop_perception_c} is given by 
$\hat p = \pi \circ p_{\textrm{resNet}}$, where $\pi: \R^{n_\xi} \to \R^n$ is a 
projection map that returns the first $n$ entries of its argument, namely,
$\pi(y) = (y_1, \ldots, y_n)$, for any $y \in \R^q$.  
Summarizing, the  training step $\operatorname{NN-learning}(\cdot)$  in 
Algorithm~\ref{alg:perception_cost} in this case involves the training of the 
map $p_{\textrm{resNet}}$, followed by the projection 
$\hat{x} = \hat p(\xi) = \pi(p_{\textrm{ claim}}(\xi))$.  
Finally, we notice that in this case the claim in 
Proposition~\ref{eq:statePerceptionResidualNet} holds unchanged by replacing 
$p(\xi)$ with $\tilde p(\xi)$. This follows by noting that $\|p(\xi) - \hat{p}(\xi)\| \leq \sqrt{n_\xi} \| p(\xi) - p_\textrm{resNet}(\xi)\|_\infty$.
\QEDB
\vspace{-.3cm}
\end{remark}

\begin{remark}[\bf \textit{Density of Training Set}]
Proposition~\ref{prop:General_perceptionstate} requires the training set 
$\mathcal{Q}_{\textrm{train},s}$ is a $\varrho$-cover of 
$\mathcal{Q}_{\textrm{train}}$, with respect to the partial order $\preceq$.
As pointed out in~\cite{Marchi2022}, verifying this condition often involves 
computing the relative position of the training points and the points in the set
$\mathcal{Q}_{\textrm{train}}$. 
When this is not possible, \cite[Lemma~2]{Marchi2022} shows that there exists a 
relationship between $\varrho$-covering of the set  
$\mathcal{Q}_{\textrm{train}}$ with respect to $\preceq$ and the density of the 
training points. In particular, the authors show that if the set of training 
points is a $\varrho'$-cover of $\mathcal{Q}_{\textrm{train}}$, then it is also 
a $\varrho$-covering for a set  the set  $\mathcal{Q}_{\textrm{train}}'$, $\mathcal{Q}_{\textrm{train}} \subset \mathcal{Q}_{\textrm{train}}'$, with respect to $\preceq$, for some $\varrho > \varrho'$; see~\cite[Lemma~2]{Marchi2022}.  
\QEDB\end{remark}

\vspace{-.2cm}

In the remainder of this section, we focus on characterizing the performance 
of~\eqref{eq:closedloop_perception} when a feedforward network is utilized to 
reconstruct the system state. 
More precisely, we consider cases where the training set 
$\{\xi^{(i)}, x^{(i)}\}_{i = 1}^N$ is utilized to train $n$ multilayer 
feedforward networks, each of them implementing a map
$p_{\textrm{feedNet},i}: \R^{n_\xi} \to \R$ that estimates the $i$-th component 
of the system state $\hat x_i = p_{\textrm{feedNet},i}(\xi)$. 
In this case, we  assume that Algorithm~\ref{alg:perception_cost} is implemented
with  $\hat p = p_{\textrm{feedNet}}$, where $p_{\textrm{feedNet}}(
\xi) := (p_{\textrm{feedNet},1}(
\xi), \ldots, p_{\textrm{feedNet},n}(
\xi))$ in~\eqref{eq:closedloop_perception}. 
Next, we recall that feedforward neural networks are capable of approximating 
any measurable function on compact sets with any desired degree of accuracy 
(see, for instance,~\cite{hornik1989multilayer,barron1994approximation} and the 
bounds in~\cite{mehrabi2018bounds,goebbels2022sharpness}). 

\vspace{-.2cm}

\begin{proposition}{\bf \textit{(Transient Performance of 
Algorithm~\ref{alg:perception_cost} with Feedforward Neural Network)}}
\label{prop:General_perceptionstate_forward}
Consider the closed-loop system~\eqref{eq:closedloop_perception} and let Assumptions \ref{as:steadyStateMap}-\ref{as:perceptionmap} be satisfied. Suppose that $\hat p(
\xi) = (p_{\textrm{feedNet},1}(
\xi), \ldots, p_{\textrm{feedNet},n}(
\xi))$, with $p_{\textrm{feedNet},i}$ approximating the map $p_i(\xi)$.  Then, if $\eta \in (0,\eta^*)$, the error $z(t) = (x - x^*_t, u - u^*_t)$ of~\eqref{eq:closedloop_perception} satisfies~\eqref{eq:genErrorBound} with $\gamma = 0$, 
\begin{align}
\delta & = \ell_{h_u} \ell_x \sqrt{n} \sup_{\xi \in \mathcal{Q}_{\textrm{train}}} \|p(\xi) - p_{\textrm{feedNet}}(\xi)\|_\infty \, , \label{eq:error_perception_fnet}
\end{align}
and $\kappa_1, \kappa_2, \kappa_3$ as in Theorem~\ref{thm:General_stability}. 
\QEDB
\end{proposition}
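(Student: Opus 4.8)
The plan is to follow the same template as the proof of Proposition~\ref{prop:General_perceptionstate}, reducing the feedforward case to a direct application of Theorem~\ref{thm:General_stability}. First, I would observe that the controller~\eqref{eq:closedloop_perception_c} is an instance of the general perturbed dynamics~\eqref{eq:closedloop_error_b}: by adding and subtracting the true gradient $H(u)^\top\nabla\psi(x)$, the effective gradient error is
\begin{align*}
e(x,u) = H(u)^\top\big(\nabla\psi(p(\xi)) - \nabla\psi(\hat p(\xi))\big),
\end{align*}
where $\xi = q(x)$ and $\hat p = p_{\textrm{feedNet}}$. Since $x$ ranges over the compact set $\mathcal{X}_{\textrm{train}}$, the observation $\xi$ ranges over the compact set $\mathcal{Q}_{\textrm{train}} = q(\mathcal{X}_{\textrm{train}})$ by Assumption~\ref{as:perceptionmap}, which makes a uniform bound meaningful.

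Second, I would bound $\|e(x,u)\|$ uniformly over $\mathcal{Q}_{\textrm{train}}$. Using that $\|H(u)\| \le \ell_{h_u}$ (which holds by Assumption~\ref{as:steadyStateMap} and compactness of $\sbs{\mathcal{U}}{c}$) together with the $\ell_x$-smoothness of $\psi$ from Assumption~\ref{as:LipsConvex}(a), I obtain $\|e(x,u)\| \le \ell_{h_u}\ell_x\, \|p(\xi) - p_{\textrm{feedNet}}(\xi)\|$. The essential difference from the residual-network case is that no covering/sampling argument is needed here: because each scalar map $p_i$ is approximated on the compact set $\mathcal{Q}_{\textrm{train}}$ by a feedforward network $p_{\textrm{feedNet},i}$, the universal approximation property~\cite{hornik1989multilayer,barron1994approximation} directly guarantees a finite uniform approximation error over the \emph{entire} set $\mathcal{Q}_{\textrm{train}}$, rather than only over the sampled points $\mathcal{Q}_{\textrm{train},s}$.

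Third, I would convert the Euclidean norm into the supremum norm controlled by the architecture. Since $\hat p$ concatenates $n$ scalar networks estimating the $n$ components of the state, $\|p(\xi) - p_{\textrm{feedNet}}(\xi)\| \le \sqrt{n}\,\|p(\xi) - p_{\textrm{feedNet}}(\xi)\|_\infty$, whence
\begin{align*}
\|e(x,u)\| \le \ell_{h_u}\ell_x\sqrt{n}\sup_{\xi\in\mathcal{Q}_{\textrm{train}}}\|p(\xi)-p_{\textrm{feedNet}}(\xi)\|_\infty =: \delta .
\end{align*}
This bound is a state-independent constant, so the error condition~\eqref{eq:errorcondition} holds with $\gamma = 0$ and $\delta$ as in~\eqref{eq:error_perception_fnet}; in particular $\gamma = 0 \in [0, c_0/c_3)$ trivially, and $\alpha = c_0$. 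The conclusion then follows from Theorem~\ref{thm:General_stability} for any $\eta \in (0,\eta^*)$.

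Insofar as there is a main obstacle, it is conceptual rather than technical: the proof must correctly isolate the dimension factor $\sqrt{n}$, as opposed to the $\sqrt{n_\xi}$ appearing in Proposition~\ref{prop:General_perceptionstate}, which arises because the feedforward construction produces an $n$-dimensional estimate by stacking $n$ scalar approximators, whereas the residual map acts on the full $n_\xi$-dimensional observation space. One should also verify that, with $\gamma = 0$, the admissibility requirement on the set of initial conditions in Theorem~\ref{thm:General_stability} is met, so that the ISS bound~\eqref{eq:genErrorBound} is valid on $\mathcal{D}_0$; this is immediate since setting $\gamma=0$ only enlarges the convergence rate $\alpha$ and leaves $\kappa_1,\kappa_2,\kappa_3$ unchanged.
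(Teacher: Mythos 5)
Your proposal is correct and follows exactly the route the paper intends: the paper omits this proof, stating only that it "follows similar steps as in Proposition~\ref{prop:General_perceptionstate}," and your argument reproduces that template — writing the controller as~\eqref{eq:closedloop_error_b} with error $e(x,u)=H(u)^\top(\nabla\psi(p(\xi))-\nabla\psi(\hat p(\xi)))$, bounding it by $\ell_{h_u}\ell_x\sqrt{n}\sup_{\xi\in\mathcal{Q}_{\textrm{train}}}\|p(\xi)-p_{\textrm{feedNet}}(\xi)\|_\infty$, and invoking Theorem~\ref{thm:General_stability} with $\gamma=0$. Your remarks on why no covering argument is needed and why the dimension factor is $\sqrt{n}$ rather than $\sqrt{n_\xi}$ correctly capture the only substantive differences from the residual-network case.
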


The proof follows similar steps as in 
Proposition~\ref{prop:General_perceptionstate}, and it is omitted. 
Proposition~\ref{prop:General_perceptionstate_forward} shows that the control 
method in Algorithm~\ref{alg:perception_cost} guarantees convergence to the 
optimizer of~\eqref{opt:objectiveproblem}, up to an error that depends only on 
the uniform approximation error
$\sup_{\xi \in \mathcal{Q}_{\textrm{train}}} \|p(\xi) - p_{\textrm{feedNet}}(\xi)\|_\infty$ (computed over the entire training set 
$\mathcal{Q}_{\textrm{train}}$).
Notice that, with respect to Proposition~\ref{prop:General_perceptionstate}, 
the adoption of a feedforward network allows us to provide tighter 
guarantees in terms of the entire set $\mathcal{Q}_{\textrm{train}}$ 
(as opposed to the set of available samples 
$\mathcal{Q}_{\textrm{train},s}$).  
We conclude by noting that the bound~\eqref{eq:error_perception_fnet} can be 
further customized for specific error bounds, given the architecture of the 
feedforward network~\cite{mehrabi2018bounds,goebbels2022sharpness}.  

\begin{remark}[\bf \textit{Noisy generative and perception maps}]
Assumption~\ref{as:perceptionmap} is borrowed from~\cite{dean2021certainty} and it holds when, for example, $q$ is injective. Although the model in Assumption~\ref{as:perceptionmap} is used for simplicity, the subsequent analysis of our perception-based controllers can be readily extended to the case where: (i) the perception map imperfectly estimates the state; that is, one has that $p(\xi) = x + \nu$, with $\xi = q(x)$, and where $\nu \in \mathbb{R}^n$ is a bounded error~\cite{dean2020robust}. (ii) When unknown externalities enter the generative map.  One way to collectively account for both externalities entering $q$ and for approximate perception map is to use the noisy model $p(q(x)) = x + \nu'$, with $\nu' \in \mathbb{R}^n$ a given error (bounded in norm). The results presented in this section can be readily modified to account for this additional error by adding a term proportional to the norm of $\nu'$ in the parameter $\delta$.
\end{remark}

\section{Optimization with Cost-Function Perception}
\label{sec:costPerception}

In this section, we propose an algorithm to address 
Problem~\ref{pro:cost}, and we tailor the conclusions drawn in  
Theorem~\ref{thm:General_stability} to characterize the performance of the 
proposed algorithm.

\subsection{Algorithm Description}
To determine estimates of the gradient functions 
$\nabla \phi(u), \nabla\psi(x)$, we assume the availability of functional 
evaluations $\{(u^{(i)}, \phi(u^{(i)}))\}_{i = 1}^{N}$ and 
$\{(x^{(i)},  \psi(x^{(i)}))\}_{i = 1}^M$, with 
$u^{(i)} \in \mc C$ and 
$x^{(i)} \in \mathcal{X}_{\textrm{train}}$.  
We then consider the training of two neural networks that approximate the  
functions $u \mapsto \phi(u)$ and $x \mapsto \psi(x)$, respectively, to 
determine $\hat \phi(u)$, $\hat \psi(x)$.
Accordingly, we modify the controller~\eqref{eq:ideal_controller} to operate 
with estimates of the system state $\hat x$ produced by the neural network. The 
proposed framework is described  in Algorithm~\ref{alg:cost_perception} and 
illustrated in Figure~\ref{fig:system_2}. 

\begin{algorithm}
\caption{Optimization with NN Cost Perception}
\label{alg:cost_perception}

\# \textbf{Training}

\quad Given: $\{u^{(i)},  \phi(u^{(i)})\}_{i = 1}^{N}$, $\{x^{(i)}, \psi(x^{(i)})\}_{i = 1}^M$ 

\quad Obtain: 
\vspace{-.2cm}
\begin{align*}
\hat \phi & \gets \operatorname{NN-learning}(\{(u^{(i)}, \phi(u^{(i)}))\}_{i = 1}^N)    \\ 
\hat \psi & \gets \operatorname{NN-learning}(\{(x^{(i)}, \psi(x^{(i)}))\}_{i = 1}^M)    
\end{align*}

\# \textbf{Gradient-based  Feedback Control}

\quad Given: $x(t_0) \in \mathcal{D}_0$, $u(t_0) \in \mathcal{C}$, NN maps $\hat \phi, \hat \psi$.  

\quad For $t \geq t_0$:
\begin{subequations}
\label{eq:closedloop_cost}
\begin{align}
 \dot x & =  f(x, u, w_t)  \\
      \hat{g}_u(u) & = \sum_{i = 1}^{n_u} \frac{1}{2\varepsilon} \left( \hat \phi(u + \varepsilon b_i) -  \hat \phi(u - \varepsilon b_i) \right) b_i , \label{eq:closedloop_cost_u} \\
      \hat{g}_x(x) & = \sum_{i = 1}^n \frac{1}{2\varepsilon} \left( \hat \psi(x + \varepsilon d_i) -  \hat \psi(x - \varepsilon d_i) \right) d_i ,  \\
      \dot u & = \proj \hspace{-.1cm} \left\{u-\eta \left(\hat g_u(u) + H(u)^\top \hat g_x(x) \right) \right\}-u \label{eq:closedloop_cost_contr} \hspace{-.2cm}
\end{align}
\end{subequations}

\end{algorithm}

\begin{figure}[h!]
\centering \includegraphics[width=1.0\columnwidth]{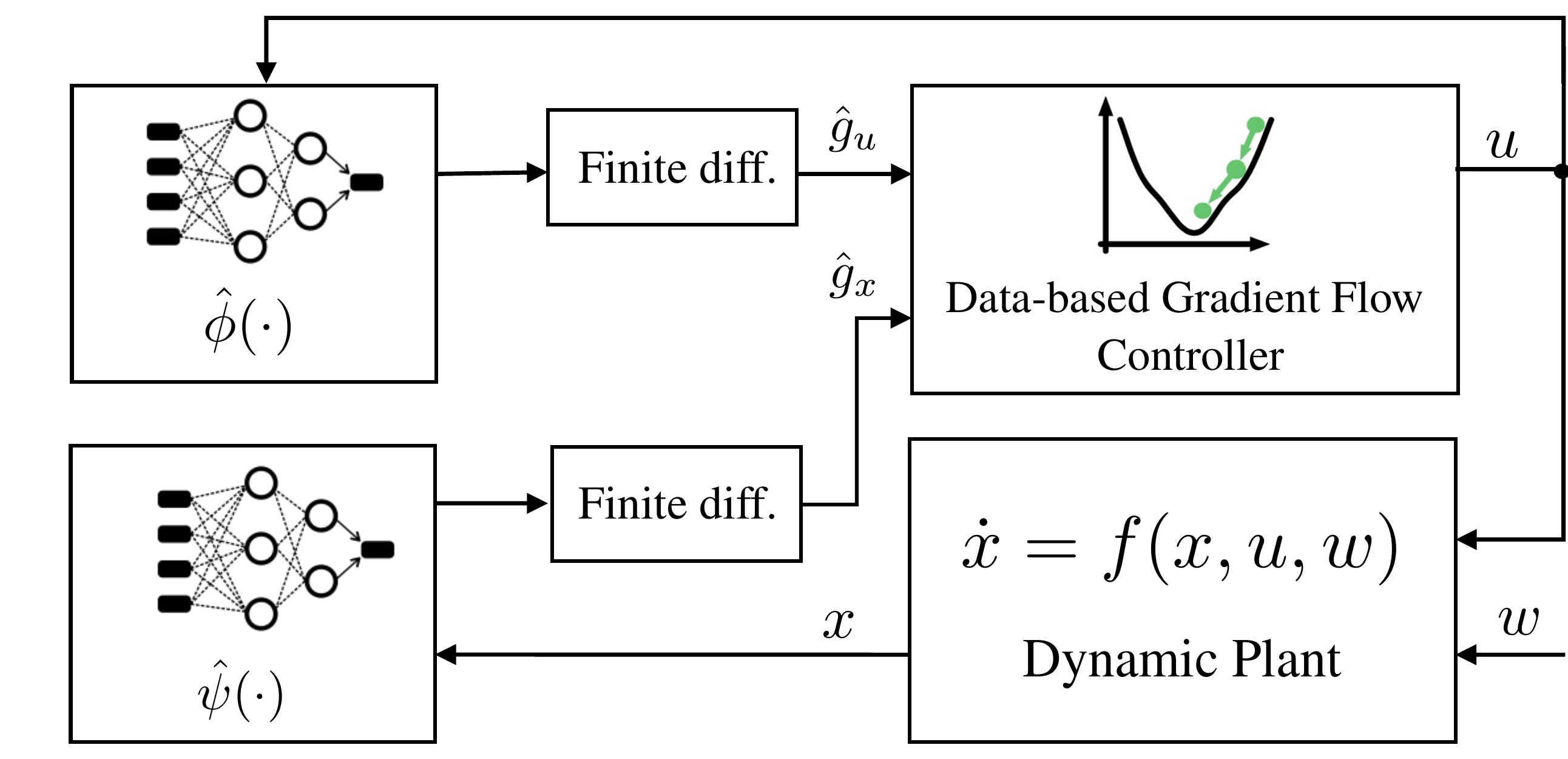}
\caption{Schematics of the control framework proposed in 
Algorithm~\ref{alg:cost_perception}. A gradient-based controller 
is utilized to control the system to the optimizer of an unknown 
cost function. The underlying cost function is estimated from 
data samples through a residual neural network.}
\label{fig:system_2}
\end{figure}

In Algorithm~\ref{alg:cost_perception}, the gradients of the costs are obtained 
via centered difference, applied to the estimated maps $\hat{\phi}$ and 
$\hat{\psi}$, where $\varepsilon > 0$, $b_i$ denotes the $i$-th canonical vector
of $\R^{n_u}$, and $d_i$ is the the $i$-th canonical vector of $\R^n$. The 
computation of the approximate gradient $\hat{g}_u$ (respectively, $\hat{g}_x$) 
thus requires $2 n_u$ functional evaluations (respectively, $2 n$) of the neural
network map $\hat{\phi}$ (respectively, $\hat \psi$). The gradient estimates 
$\hat{g}_u$ and $\hat{g}_x$ are then utilized in the gradient-based feedback 
controller~\eqref{eq:closedloop_cost_contr}.  

\subsection{Analysis of Algorithm~\ref{alg:cost_perception}}
We begin by characterizing the performance of~\eqref{eq:closedloop_cost} 
when feedforward networks are utilized to estimate the costs.~ 

\vspace{-.2cm} 

\begin{proposition}{\bf \textit{(Transient Performance of 
Algorithm~\ref{alg:cost_perception} with Feedforward Neural Network)}}
\label{prop:General_perceptioncost_forward}
Suppose that feedforward network maps  $\hat{\phi}_{\textrm{feedNet}}$ and $\hat{\psi}_{\textrm{feedNet}}$ approximate the costs $\phi$ and $\phi$ over the compact sets $\mathcal{X}_{\textrm{train}}$ and $\mathcal{C}_{\textrm{train}} := \mathcal{C} + \mathcal{B}[\varepsilon]$, respectively.  Consider the interconnected system~\eqref{eq:closedloop_cost}, with $\hat \phi = \hat{\phi}_{\textrm{feedNet}}$ and $\hat \psi = \hat{\psi}_{\textrm{feedNet}}$, and let Assumptions \ref{as:steadyStateMap}-\ref{as:mapu} be satisfied. If $\eta \in (0,\eta^*)$, then error $z(t) = (x - x^*_t, u - u^*_t)$ satisfies~\eqref{eq:genErrorBound} with $\kappa_1, \kappa_2, \kappa_3$ as in Theorem~\ref{thm:General_stability}, $\gamma = 0$, and 
\begin{align}
    \delta & = e_{u,\textrm{fd}} + n_u  \varepsilon^{-1} \sup_{u \in \mathcal{C}_{\textrm{train}} } |\phi(u) - \hat{\phi}_{\textrm{feedNet}}(u)| \nonumber \\
    & ~~+ \ell_{h_u} e_{x,\textrm{fd}} + n \varepsilon^{-1} \ell_{h_u} \sup_{u \in \mathcal{X}_{\textrm{train}}} |\psi(x) - \hat{\psi}_{\textrm{feedNet}}(x)|,  \label{eq:error_cost_fnet}
\end{align}
where $e_{x,\textrm{fd}}$ and $e_{x,\textrm{fd}}$ are bounds on the centered difference approximation error for the functions $\phi$ and $\psi$, respectively; namely, $ e_{u,\textrm{fd}} = \sup_{u \in \mathcal{C}_{\textrm{train}}} \|\nabla \phi(u) - \sum_{i = 1}^{n_u} \frac{1}{2\varepsilon} \left(\phi(u + \varepsilon b_i) -  \phi(u - \varepsilon b_i) \right) b_i\|$ and $ e_{x,\textrm{fd}} = \sup_{x \in \mathcal{X}_{\textrm{train}}} \|\nabla \psi(x) - \sum_{i = 1}^{n} \frac{1}{2\varepsilon} \left(\psi(x + \varepsilon d_i) -  \psi(x - \varepsilon d_i) \right) d_i\|$.
\QEDB
\vspace{-.3cm}
\end{proposition}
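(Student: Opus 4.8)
The plan is to recast the controller~\eqref{eq:closedloop_cost_contr} into the general perturbed form~\eqref{eq:closedloop_error_b} and then invoke Theorem~\ref{thm:General_stability}, following the same template as the proof of Proposition~\ref{prop:General_perceptionstate}. Adding and subtracting the nominal gradient $F(x,u) = \nabla\phi(u) + H(u)^\top\nabla\psi(x)$ in~\eqref{eq:closedloop_cost_contr} shows that the iteration coincides with~\eqref{eq:closedloop_error_b} under the identification $e(x,u) = (\nabla\phi(u) - \hat{g}_u(u)) + H(u)^\top(\nabla\psi(x) - \hat{g}_x(x))$. The entire argument then reduces to producing a uniform, state-independent upper bound on $\|e(x,u)\|$; since this bound is a constant, it plays the role of $\delta$ in~\eqref{eq:errorcondition} with vanishing coefficient $\gamma = 0$, which is the structural reason why Theorem~\ref{thm:General_stability} applies in this form.

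The key step is to split each gradient-estimation error into a finite-difference contribution and a neural-network contribution. For the input term I would write
\begin{align*}
\nabla\phi(u) - \hat{g}_u(u) &= \Big(\nabla\phi(u) - \sum_{i=1}^{n_u}\tfrac{1}{2\varepsilon}\big(\phi(u+\varepsilon b_i) - \phi(u-\varepsilon b_i)\big)b_i\Big) \\
&\quad + \sum_{i=1}^{n_u}\tfrac{1}{2\varepsilon}\big((\phi-\hat\phi)(u+\varepsilon b_i) - (\phi-\hat\phi)(u-\varepsilon b_i)\big)b_i .
\end{align*}
The first bracket is bounded in norm by $e_{u,\textrm{fd}}$ by definition, while the second --- using the triangle inequality over the $n_u$ orthonormal canonical directions and $|(\phi-\hat\phi)(\cdot)|\le \sup_{\mathcal{C}_{\textrm{train}}}|\phi-\hat\phi_{\textrm{feedNet}}|$ at each of the two evaluation points --- is bounded by $n_u\varepsilon^{-1}\sup_{u\in\mathcal{C}_{\textrm{train}}}|\phi(u)-\hat\phi_{\textrm{feedNet}}(u)|$. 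An identical split for $\nabla\psi(x) - \hat{g}_x(x)$ gives $e_{x,\textrm{fd}} + n\varepsilon^{-1}\sup_{x\in\mathcal{X}_{\textrm{train}}}|\psi(x)-\hat\psi_{\textrm{feedNet}}(x)|$. Multiplying the state term by $H(u)^\top$ and using $\|H(u)\|\le \ell_{h_u}$ from Assumption~\ref{as:steadyStateMap}, then summing the two contributions by the triangle inequality, reproduces exactly the constant~\eqref{eq:error_cost_fnet}.

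With $\gamma = 0$ and $\delta$ equal to this constant, condition~\eqref{eq:errorcondition} holds trivially and Theorem~\ref{thm:General_stability} yields~\eqref{eq:genErrorBound} with the stated $\kappa_1,\kappa_2,\kappa_3$. The main obstacle I anticipate is not the algebra but the domain bookkeeping: the bounds above are valid only if every perturbed evaluation point $u\pm\varepsilon b_i$ and $x\pm\varepsilon d_i$ lies inside the compact set on which the feedforward approximation is controlled. This is precisely why the proposition trains $\hat\phi_{\textrm{feedNet}}$ over the inflated set $\mathcal{C}_{\textrm{train}}=\mathcal{C}+\mathcal{B}[\varepsilon]$ --- so that $u\in\mathcal{C}$, guaranteed by the forward-invariance of $\mathcal{C}$ noted after~\eqref{eq:closedloop_error}, implies $u\pm\varepsilon b_i\in\mathcal{C}_{\textrm{train}}$ --- and why $\hat\psi_{\textrm{feedNet}}$ is trained over $\mathcal{X}_{\textrm{train}}$, which contains the trajectory's state together with an $\varepsilon$-margin for sufficiently small $\varepsilon$. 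Verifying this containment, and confirming that the resulting $\delta$ is genuinely independent of $z$, is the delicate point that makes the clean application of Theorem~\ref{thm:General_stability} legitimate.
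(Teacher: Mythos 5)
Your proposal is correct and follows essentially the same route as the paper: recast \eqref{eq:closedloop_cost_contr} as \eqref{eq:closedloop_error_b} with $e(x,u) = \nabla\phi(u) - \hat g_u(u) + H(u)^\top(\nabla\psi(x) - \hat g_x(x))$, insert the finite-difference approximations of the \emph{true} gradients to split each error into a discretization part (bounded by $e_{u,\textrm{fd}}$, $e_{x,\textrm{fd}}$) and a network-approximation part (bounded via the triangle inequality over the $2n_u$, resp. $2n$, evaluation points), and invoke Theorem~\ref{thm:General_stability} with $\gamma=0$. Your added remark on why the inflated set $\mathcal{C}_{\textrm{train}}=\mathcal{C}+\mathcal{B}[\varepsilon]$ is needed to contain the perturbed evaluation points is a point the paper leaves implicit, and is a welcome clarification rather than a deviation.
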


\begin{proof}
We start rewriting~\eqref{eq:closedloop_cost_contr} as~\eqref{eq:closedloop_error_b} by setting $e(x,u) = \nabla \phi(u) - \hat{g}_u(u) + H^\top(\nabla \psi(x) - \hat{g}_x(x))$. Let $g_u(u) := \sum_{i = 1}^{n_u} \frac{1}{2\varepsilon} \left(\phi(u + \varepsilon b_i) -  \phi(u - \varepsilon b_i) \right) b_i$ and $g_x(x) := \sum_{i = 1}^{n} \frac{1}{2\varepsilon} \left(\psi(x + \varepsilon d_i) -  \psi(x - \varepsilon d_i) \right) d_i$ be the finite difference approximations of the true gradients for brevity. Adding and subtracting $g_u(u)$ and $g_x(x)$ using the triangle inequality, and Assumption~\ref{as:steadyStateMap}, we get $\|e(x,u)\| \leq \|\nabla \phi(u) - g_u(u)\| + \|g_u(u) - \hat g_u(u)\| +  \ell_{h_u} \|\nabla \psi(x) - g_x(x)\| + \ell_{h_u} \|g_x(x) - \hat g_x(x)\|$. The terms $\|\nabla \phi(u) - g_u(u)\|$ and $\|\nabla \psi(x) - g_x(x)\|$ are errors due to a finite difference approximation of the true gradients, and are bounded by $e_{u,\textrm{fd}}$ and $e_{x,\textrm{fd}}$, respectively. On the other hand, $\|g_u(u) - \hat g_u(u)\|$ can be bounded as: 
\begin{align}
    & \|g_u(u) - \hat g_u(u)\|  = \Big\| \sum_{i = 1}^{n_u} \frac{1}{2\varepsilon} \left(\phi(u + \varepsilon b_i) -  \phi(u - \varepsilon b_i) \right) b_i \nonumber \\
    & ~~~~~ - \sum_{i = 1}^{n_u} \frac{1}{2\varepsilon} \left(\hat{\phi}_{\textrm{feedNet}}(u + \varepsilon b_i) -  \hat{\phi}_{\textrm{feedNet}}(u - \varepsilon b_i) \right) b_i \Big\|\nonumber \\
    & ~~ \leq  \sum_{i = 1}^{n_u} \frac{1}{2\varepsilon} |\phi(u + \varepsilon b_i) - \hat{\phi}_{\textrm{feedNet}}(u + \varepsilon b_i)| \|b_i\| \nonumber \\
    &~~~~~ + \sum_{i = 1}^{n_u} \frac{1}{2\varepsilon} | \hat{\phi}_{\textrm{feedNet}}(u - \varepsilon b_i) - \phi(u - \varepsilon b_i)| \|b_i\| \nonumber \\
    & ~~ \leq \frac{n_u}{2\varepsilon}  \sup_{u \in \mathcal{C} + \mathcal{B}[\varepsilon]} |\phi(u) - \hat{\phi}_{\textrm{feedNet}}(u)| 
\end{align}
where we used the fact that $\|b_i\| = 1$. Similar steps can be used to bound the error term $\|g_x(x) - \hat g_x(x)\|$ to get the final expression for $\delta$ in~\eqref{eq:error_cost_fnet}.  
\end{proof}

Proposition~\ref{prop:General_perceptioncost_forward} shows that the control 
method in Algorithm~\ref{alg:cost_perception} guarantees convergence to the 
optimizer 
of~\eqref{opt:objectiveproblem}, up to an error that depends on the uniform 
approximation error of the neural networks and on the accuracy of the centered 
approximation method.
More precisely, the result characterizes the role of the approximation errors 
due to the use of a feedforward neural network in the transient and asymptotic performance 
of the interconnected system~\eqref{eq:closedloop_cost}.

In the remainder of this section, we focus on characterizing the performance 
of~\eqref{eq:closedloop_cost}  when a residual network is utilized to 
reconstruct the system state.
To provide guarantees for residual networks, it is necessary to replace 
$\phi(\cdot)$ by its the lifted counterpart 
$\tilde \phi: \R^{n_u} \to \R^{n_u}$, defined as $\tilde \phi = \iota_\phi \circ \phi$, where $\iota_\phi: \R \to \R^{n_u}$ is the injection $\iota_\phi(z) = (z, 0, \ldots, 0)$ for any $z \in \R$. Following~\cite{Tabuada-pmlr-v144-marchi21a}, we consider a residual network map $\hat{\phi}_{\textrm{resNet}}: \R^{n_u} \to \R^{n_u}$ approximating the lifted map $\tilde \phi$; the function $\hat \phi$ used in~\eqref{eq:closedloop_cost} is then given by $\hat \phi(u) = \hat{\phi}_{\textrm{resNet}}(u)^\top b_1$, where we recall that $b_1$ is the first canonical vector of $\R^{n_u}$. Similarly, consider the lifted map $\tilde \psi: \R^{n} \to \R^{n}$ defined as $\tilde \psi = \iota_\psi \circ \psi$, where $\iota_\phi: \R \to \R^{n}$ is such that $\iota_\psi(z) = (z, 0, \ldots, 0)$ for any $z \in \R$, and consider a residual network map $\hat{\psi}_{\textrm{resNet}}: \R^{n} \to \R^{n}$ approximating the lifted map $\tilde \psi$. Accordingly, it follows that $\hat \psi(x) = \hat{\psi}_{\textrm{resNet}}(x)^\top d_1$. With this setup, we have the~following. 

\vspace{-.2cm} 

\begin{proposition}{\bf \textit{(Transient Performance of 
Algorithm~\ref{alg:cost_perception} with Residual Neural Network)}}
\label{prop:General_perceptioncost_residual}
Suppose that residual network maps  $\hat{\phi}_{\textrm{resNet}}$ and $\hat{\psi}_{\textrm{resNet}}$ approximate the functions $\tilde \phi$ and $\tilde \phi$ over the compact sets $\mathcal{X}_{\textrm{train}}$ and $\mathcal{C}_{\textrm{train}}$, respectively.
Suppose that the set of training points $\sbs{\mc C}{train,s} := \{u_i^{(i)}\}$ is a $\varrho_u$-cover of $\mathcal{C}_{\textrm{train}}$ with respect to the partial order $\preceq$, for some $\varrho_u > 0$, and  $\sbs{\mc X}{train,s} =  \{x_i^{(i)}\}$ is a $\varrho_x$-cover of $\mathcal{X}_{\textrm{train}}$ with respect to the partial order $\preceq$, for some $\varrho_x > 0$. Moreover, suppose  that  the residual network maps can be decomposed as $\hat \phi_{\textrm{resNet}} = m_u + A_u$ and $\hat \psi_{\textrm{resNet}} = m_x + A_x$,  where $m_u: \R^{n_u} \to \R^{n_u}$ and $m_x: \R^{n} \to \R^{n}$ are monotone, and  $A_u, A_x$ are a linear functions. Consider the interconnected system~\eqref{eq:closedloop_cost}, with $\hat \phi(u) = \hat{\phi}_{\textrm{resNet}}(u)^\top b_1$ and $\hat \psi(x) = \hat{\psi}_{\textrm{resNet}}(x)^\top d_1$, and let Assumptions \ref{as:steadyStateMap}-\ref{as:mapu} be satisfied. If $\eta \in (0,\eta^*)$, the error $z(t) = (x - x^*_t, u - u^*_t)$ satisfies~\eqref{eq:genErrorBound} with $\kappa_1, \kappa_2, \kappa_3$ as in Theorem~\ref{thm:General_stability}, $\gamma = 0$, and 
\begin{align}
    \delta & = e_{u,\textrm{fd}} + n_u^{3/2}  \varepsilon^{-1} \left( 3 e_{u,\textrm{train}} + 2 \, \omega_\phi(\varrho_u) + 2  \|A_u\|_\infty \right)\nonumber \\
    & ~~~+ \ell_{h_u} e_{x,\textrm{fd}} + n^{3/2} \varepsilon^{-1} \ell_{h_u} \left( 3 e_{x,\textrm{train}} + 2 \, \omega_\psi(\varrho_x) + 2  \|A_x\|_\infty \right) \label{eq:error_cost_fnet}
\end{align}
where $e_{x,\textrm{fd}}$ and $e_{x,\textrm{fd}}$ are defined as in Proposition~\ref{prop:General_perceptioncost_forward}, 
$\omega_u, \omega_x$ are the moduli of continuity of $\tilde \phi$ and $\tilde \psi$, respectively, and 
\begin{align*}
 e_{u, \textrm{train}} &:= \sup_{u \in \sbs{\mc C}{train,s} } \|\tilde \phi(u) - \hat{\phi}_{\textrm{feedNet}}(u)\|_\infty,\\   
e_{x, \textrm{train}} &:= \sup_{x \in \sbs{\mc X}{train,s} } \|\tilde \psi(x) - \hat{\psi}_{\textrm{feedNet}}(x)\|_\infty.
\end{align*}
\vspace{-.3cm}
\QEDB\end{proposition}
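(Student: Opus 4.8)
The plan is to treat this as a hybrid of the proofs of Propositions~\ref{prop:General_perceptioncost_forward} and~\ref{prop:General_perceptionstate}: the finite-difference bookkeeping is inherited from the former, while the residual-network approximation error is controlled via the covering estimate of the latter. First I would rewrite the controller~\eqref{eq:closedloop_cost_contr} in the generic form~\eqref{eq:closedloop_error_b} with the identical error term $e(x,u) = \nabla \phi(u) - \hat g_u(u) + H(u)^\top(\nabla \psi(x) - \hat g_x(x))$. Inserting the exact centered-difference gradients $g_u$ and $g_x$ and applying the triangle inequality together with the Jacobian bound from Assumption~\ref{as:steadyStateMap} gives the four-term split
\begin{align*}
\|e(x,u)\| &\leq \|\nabla \phi(u) - g_u(u)\| + \|g_u(u) - \hat g_u(u)\| \\
&\quad + \ell_{h_u}\|\nabla \psi(x) - g_x(x)\| + \ell_{h_u}\|g_x(x) - \hat g_x(x)\|.
\end{align*}
The first and third terms are exactly $e_{u,\textrm{fd}}$ and $e_{x,\textrm{fd}}$ by definition, so the work reduces to bounding the two approximation terms $\|g_u - \hat g_u\|$ and $\|g_x - \hat g_x\|$.

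Second, I would expand $g_u - \hat g_u$ componentwise along the canonical directions $b_i$, exactly as in Proposition~\ref{prop:General_perceptioncost_forward}, reducing the bound to the per-query scalar error $\sup_u |\phi(u) - \hat \phi(u)|$ over $\mathcal{C}_{\textrm{train}} = \mathcal{C} + \mathcal{B}[\varepsilon]$. The key new step is to pass from this scalar cost error to the residual network: since $\phi(u) = \tilde\phi(u)^\top b_1$ and $\hat\phi(u) = \hat\phi_{\textrm{resNet}}(u)^\top b_1$ by construction, one has $|\phi(u) - \hat\phi(u)| \leq \|\tilde\phi(u) - \hat\phi_{\textrm{resNet}}(u)\|_\infty$, which couples the controller error to the vector-valued approximation error of the lifted map. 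I would then invoke the residual-network covering bound~\cite[Thm.~7]{Tabuada-pmlr-v144-marchi21a}, using the hypothesis that $\sbs{\mc C}{train,s}$ is a $\varrho_u$-cover of $\mathcal{C}_{\textrm{train}}$ with respect to $\preceq$ together with the decomposition $\hat\phi_{\textrm{resNet}} = m_u + A_u$ into a monotone and a linear part, to obtain $\sup_{u \in \mathcal{C}_{\textrm{train}}}\|\tilde\phi - \hat\phi_{\textrm{resNet}}\|_\infty \leq 3 e_{u,\textrm{train}} + 2\omega_\phi(\varrho_u) + 2\|A_u\|_\infty$. The identical argument applied to $\psi$, $\tilde\psi$, $\hat\psi_{\textrm{resNet}}$ and the cover $\sbs{\mc X}{train,s}$ handles $\|g_x - \hat g_x\|$.

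Finally, collecting the four contributions produces a bound on $\|e(x,u)\|$ that is a pure constant, independent of $z$; hence $\gamma = 0$ and $\delta$ is the expression in~\eqref{eq:error_cost_fnet}, so the transient bound~\eqref{eq:genErrorBound} follows directly from Theorem~\ref{thm:General_stability}. The main obstacle I anticipate is the norm bookkeeping: one must carefully track how the dimension factors accumulate --- the Euclidean norm of the centered-difference vector over $n_u$ directions, the scalar-to-infinity-norm passage $|v^\top b_1| \le \|v\|_\infty$, and the $\varepsilon^{-1}$ from the difference quotient --- in order to arrive cleanly at the $n_u^{3/2}\varepsilon^{-1}$ and $n^{3/2}\varepsilon^{-1}$ prefactors, while simultaneously verifying that the partial-order $\varrho$-cover and the monotone-plus-linear decomposition are precisely the hypotheses needed to license the componentwise use of the covering estimate.
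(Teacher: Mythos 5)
Your proposal is correct and matches the paper's intended argument: the paper omits this proof, stating only that it follows the same steps as Proposition~\ref{prop:General_perceptioncost_forward}, and your reconstruction --- the four-term triangle-inequality split isolating $e_{u,\textrm{fd}}$ and $e_{x,\textrm{fd}}$, the componentwise centered-difference expansion reducing $\|g_u-\hat g_u\|$ to the scalar supremum error, and the passage to the lifted maps followed by the covering bound of \cite[Thm.~7]{Tabuada-pmlr-v144-marchi21a} exactly as in Proposition~\ref{prop:General_perceptionstate} --- is precisely that route. The only remark worth adding is that your passage $|(\tilde\phi(u)-\hat\phi_{\textrm{resNet}}(u))^\top b_1|\le\|\tilde\phi(u)-\hat\phi_{\textrm{resNet}}(u)\|_\infty$ yields the prefactor $n_u\varepsilon^{-1}$ rather than the stated $n_u^{3/2}\varepsilon^{-1}$ (the extra $\sqrt{n_u}$ arises only if one detours through the Euclidean norm), so your bound is if anything slightly tighter than, and hence consistent with, the one claimed.
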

The proof of Proposition~\ref{prop:General_perceptioncost_residual} follows 
similar steps as the proof of   
Proposition~\ref{prop:General_perceptioncost_forward} and is omitted for space 
limitations.
Proposition~\ref{prop:General_perceptioncost_residual} shows that the control 
method in Algorithm~\ref{alg:cost_perception} guarantees convergence to the 
optimizer of~\eqref{opt:objectiveproblem}, up to an error that depends on the 
uniform approximation error of the neural networks and on the accuracy of the 
centered approximation method.
Notice that, with respect to the characterization 
in~Proposition~\ref{prop:General_perceptioncost_forward}, the use of a residual 
neural network allows us to characterize the error with respect to accuracy of 
the available set of samples $\sbs{\mc C}{train,s}$ and $\sbs{\mc X}{train,s}$.

\section{Application to robotic control}

In this section, we illustrate how to apply the proposed framework to control 
a unicycle robot to track an optimal equilibrium point and whose position is 
accessible only through camera images. We consider a robot described by 
unicycle dynamics  with state $x = (a,b, \theta)\in \real^3,$ where 
$r := (a,b)^\tsp \in \real^2$ denotes the position of the robot in a 
2-dimensional plane, and $\theta \in(-\pi, \pi]$ denotes its orientation with 
respect to the $a-$axis~\cite{terpin2021distributed}. The unicycle dynamics are:
\begin{align}\label{eq:unicycle}
\dot a &= v \cos (\theta), &
\dot b &= v \sin (\theta), &
\dot \theta &= \omega,
\end{align}
where $v,\omega \in \real$ are the controllable inputs.
Given the dynamics~\eqref{eq:unicycle}, we assume that its state 
$x = (a,b,\theta)$ is not directly measurable for control purposes, instead,
at every time $x$ can be observed only through a noisy camera image, denoted 
by $\xi = q(x)$; see, e.g.~\cite{dean2021certainty,dean2020robust}.
To tackle the desired problem, we consider an instance 
of~\eqref{opt:objectiveproblem} with:
\begin{align*}
\phi(u) &= \|u\|^2, & 
\psi(r) &= \norm{r - r^{f}}^2,
\end{align*}
where $r^{f} \in \real^2$ denotes the desired final position of the robot.

To address this problem, we consider a two-level control architecture, where 
an onboard (low-level) stabilizing controller is first used to stabilize the 
unicycle dynamics~\eqref{eq:unicycle} (to guarantee satisfaction of 
Assumption~\ref{as:stabilityPlant}) and, subsequently, the control framework 
outlined in Algorithm~\ref{alg:perception_cost} is utilized to select an 
optimal high-level control references. To design a stabilizing controller, let
$u=(u_a, u_b) \in \real^2$ denote the instantaneous high-level control input, 
and consider the standard change of variables from rectangular to polar 
coordinates $(\xi, \phi)$, given by: 
\begin{align}\label{eq:polarCoordinates}
\xi &:= \norm{u - x},  & 
\phi &:= \operatorname{atan}\left(\frac{u_b - b}{u_a-a} \right) - \theta.
\end{align}
In the new variables, the dynamics read as:
\begin{align}\label{eq:dynamics-xi-phi}
\dot \xi &= -v \cos(\phi), & 
\dot \phi &= \frac{v}{\xi} \sin(\phi) - \omega.
\end{align}
The following lemma provides a stabilizing control law 
for~\eqref{eq:dynamics-xi-phi}.

\begin{lemma}{\bf \textit{(Stability of unicycle dynamics)}}
\label{lem:stabilizingControlUnicycle}
The unicycle dynamics~\eqref{eq:dynamics-xi-phi} with the following control 
law:
\begin{align}\label{eq:stabilizingControl}
v &= k \xi \cos(\phi), &
\omega  &= k (\cos(\phi) +1)\sin(\phi) + k\phi,
\end{align}
$k>0$, admit a unique equilibrium point $(\xi, \phi) = (0,0)$ that is globally
exponentially stable. 
\end{lemma}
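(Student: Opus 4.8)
The plan is to substitute the stabilizing control law~\eqref{eq:stabilizingControl} into the dynamics~\eqref{eq:dynamics-xi-phi} and exploit the triangular structure of the resulting closed loop. Inserting $v = k\xi\cos\phi$ and $\omega = k(\cos\phi+1)\sin\phi + k\phi$, the cross terms $k\cos\phi\sin\phi$ cancel in the $\dot\phi$ equation, and one is left with
\begin{align}
\dot\xi &= -k\xi\cos^2\phi, & \dot\phi &= -k(\phi + \sin\phi). \label{eq:cl-unicycle}
\end{align}
The key observation is that the $\phi$-subsystem is autonomous, so I would analyze it in isolation and then treat the $\xi$-equation as driven by $\phi(t)$.

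For the $\phi$-subsystem I would first establish uniqueness of the equilibrium: on $(-\pi,\pi]$ the map $\phi\mapsto \phi+\sin\phi$ vanishes only at $\phi=0$, since $\phi+\sin\phi$ has the sign of $\phi$ throughout this interval. Setting $\phi=0$ in~\eqref{eq:cl-unicycle} reduces the first equation to $\dot\xi=-k\xi$, which forces $\xi=0$; hence $(\xi,\phi)=(0,0)$ is the unique equilibrium. For stability I would take $V_\phi=\tfrac12\phi^2$ and use $\phi\sin\phi\ge 0$ on $[-\pi,\pi]$ to obtain $\dot V_\phi = -k\phi^2 - k\phi\sin\phi \le -k\phi^2 = -2kV_\phi$, yielding $|\phi(t)|\le|\phi(0)|\,e^{-kt}$. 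In particular $\phi(t)$ never leaves $(-\pi,\pi]$, so the estimate holds globally on the natural domain.

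With the exponential decay of $\phi$ secured, I would treat $\dot\xi=-k\xi\cos^2\phi(t)$ as a scalar linear time-varying equation with explicit solution $\xi(t)=\xi(0)\exp\!\big(-k\int_0^t\cos^2\phi(\tau)\,d\tau\big)$. The subtlety here is that $\cos^2\phi$ may vanish over a transient (e.g.\ when $\phi(0)=\pm\pi/2$), so $\xi$ need not decay monotonically and no Lyapunov function for $\xi$ alone gives the bound directly. The resolution is the elementary inequality $\cos^2\phi\ge 1-\phi^2$ combined with $\int_0^\infty\phi^2(\tau)\,d\tau\le \phi(0)^2/(2k)\le \pi^2/(2k)$, which follows from the decay of $\phi$; together these give $\int_0^t\cos^2\phi\,d\tau\ge t-\pi^2/(2k)$ and therefore $\xi(t)\le e^{\pi^2/2}\,\xi(0)\,e^{-kt}$.

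Finally I would combine the two estimates into $\|(\xi(t),\phi(t))\|\le e^{\pi^2/2}\|(\xi(0),\phi(0))\|\,e^{-kt}$, which is exactly global exponential stability of $(0,0)$ with rate $k$. I expect the main obstacle to be the $\xi$-equation: because $\cos^2\phi$ can be momentarily zero, exponential decay of $\xi$ cannot be read off from a pointwise Lyapunov inequality and instead rests on the integral estimate above, which trades the transient loss of decay for the finite, uniformly bounded quantity $\int_0^\infty\phi^2$.
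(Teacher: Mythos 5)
Your proposal is correct, and for the $\phi$-subsystem it coincides with the paper's argument: the same Lyapunov function $V_\phi=\tfrac12\phi^2$, the same use of $\phi\sin\phi\ge 0$ on $(-\pi,\pi]$, and the same conclusion $\dot V_\phi\le -2kV_\phi$. Where you genuinely diverge is the $\xi$-equation. The paper disposes of it in one line by citing an external lemma (\cite[Lem.~2.1]{terpin2021distributed}), whereas you give a self-contained argument: write the explicit solution $\xi(t)=\xi(0)\exp\bigl(-k\int_0^t\cos^2\phi(\tau)\,d\tau\bigr)$, lower-bound the integrand via $\cos^2\phi=1-\sin^2\phi\ge 1-\phi^2$, and use the exponential decay of $\phi$ to get $\int_0^\infty\phi^2\le\phi(0)^2/(2k)\le\pi^2/(2k)$, hence $\xi(t)\le e^{\pi^2/2}\xi(0)e^{-kt}$. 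This is a clean way to handle exactly the point you flag --- that $\cos^2\phi$ can vanish transiently, so no pointwise Lyapunov inequality for $\xi$ alone works --- and it makes the overshoot constant $e^{\pi^2/2}$ explicit, which the paper's citation does not. The only caveat, which applies equally to the paper's proof, is that ``global'' should be read as global on the natural domain $\phi\in(-\pi,\pi]$ (your invariance remark handles forward invariance of that set, but the inequality $\phi\sin\phi\ge0$ would fail outside it). Both arguments are valid; yours is more elementary and fully self-contained.
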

\begin{proof}
Noticing that the dynamics of $\xi$ and $\phi$ are decoupled, consider the 
Lyapunov function $V(\phi) = \frac{1}{2} \phi^2$. We have:
\begin{align*}
\dot V(\phi) = - k \phi\sin(\phi) - k\phi^2 \leq -k \phi^2 = -2k V(\phi),
\end{align*}
with $V(\phi)=0$ if and only if $\phi=0$. The proof of exponential stability 
of $\xi$ follows immediately from~\cite[Lem.~2.1]{terpin2021distributed}.
\end{proof}

According to Lemma~\ref{lem:stabilizingControlUnicycle}, the 
dynamics~\eqref{eq:dynamics-xi-phi} with the on-board control 
law~\eqref{eq:stabilizingControl} satisfy Assumption~\ref{as:stabilityPlant}.
We next apply the perception-based control framework outlined in Algorithm~\ref{alg:cost_perception} to design the reference input $u$ to be 
utilized in~\eqref{eq:polarCoordinates}.

\begin{figure}[t]
\centering
\subfigure[]{\includegraphics[width=1.0\columnwidth]{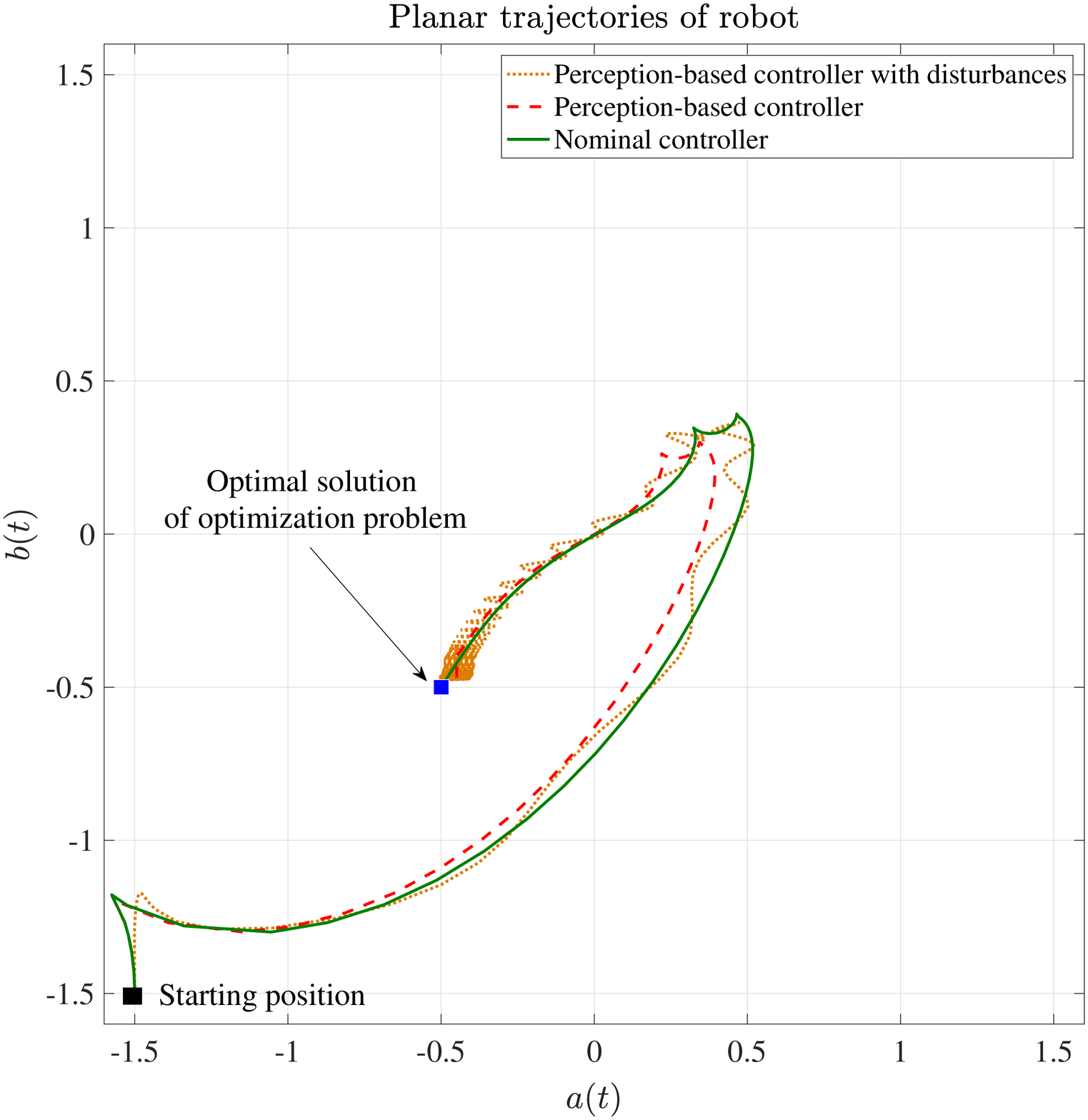}} \vspace{-.2cm} \\
\centering
\subfigure[]{\includegraphics[width=.85\columnwidth]{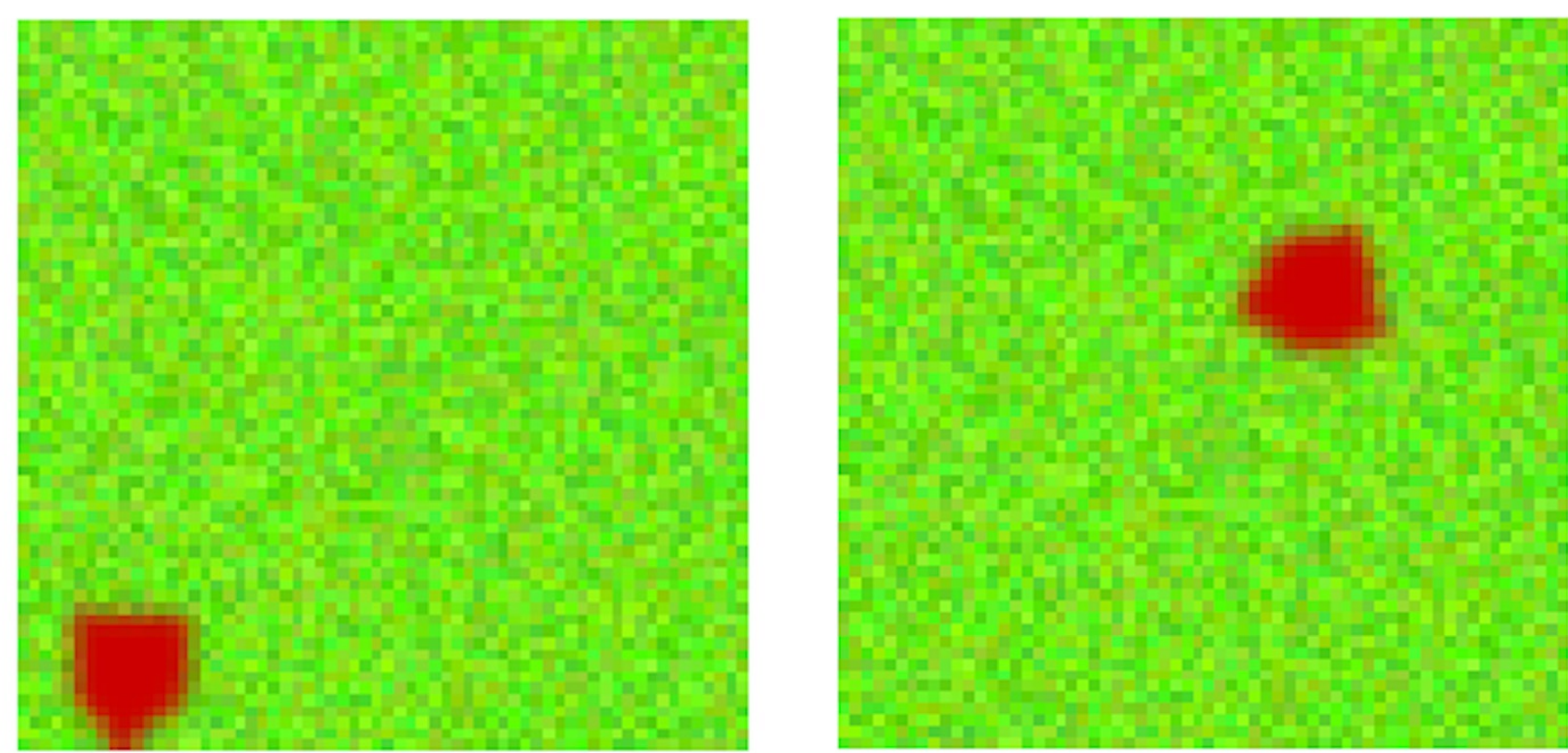}}   \\
\caption{Numerical validation of Algorithm~\ref{alg:perception_cost}. The 
technique is used to control a robot whose state $(a,b,\theta)$ cannot be 
directly measured and is instead estimated using a residual neural network from aerial images.  
(a) Trajectory of the robot (in the coordinates $(a,b)$) with the perception-based controller and with a nominal controller using perfect state information. (b) Images that the perception-based controller uses at $t=4.32$ seconds (left image)  and $t=70.36$ seconds (right image). 
}
\label{fig:unicyclePerception}
\end{figure}

For our perception-based controller, we  use a residual neural network to estimate the  state perception map; in particular, the neural network returns estimated state coordinates from aerial images following the procedure of Algorithm~\ref{alg:cost_perception}. To prepare for the execution of Algorithm~\ref{alg:cost_perception}, we generate $122,880$ square, blurry, RGB images of size $64 \times 64$ pixels to represent the location and orientation of the robot on a two-dimensional plane. These images were built using the \textit{MATLAB Image Processing Toolbox}. First, we build a base image of a maroon pentagon representing the robot over a grassy background (using ranges of RGB values for different green colors), and then add blurriness to the image using the MATLAB function  \textit{imgaussfilt}; see, for example, the sample images in Figure~\ref{fig:unicyclePerception}(b). This function filters the base image with a 2-D Gaussian smoothing kernel with a standard deviation of $0.5$. 
For our residual network, we used the 
    \textit{resnet50}
 network structure given in the 
    \textit{MATLAB Deep Learning Toolbox}
 and tailored the input output sizes for our particular setting. Specifically, we set the input layer to read in images of size $64 \times 64$ pixels and set the number of possible output labels as $64^2 = 4996$ to account for all possible locations or coordinates of the pixelized image. The number of training images for the network was chosen so that all possible locations or coordinates of the pixelized image are generated for 30 different orientations, totaling $122,880$ training images. After training, the residual network returns estimated locations in the 2D plane by identifying the center pixel of the robot. Of course, during the time-evolution of the closed loop dynamics with the residual network, the returned labels are converted into $(a,b)$ coordinates. 
 
 Simulation results  are presented in Figure~\ref{fig:unicyclePerception}, when using the initial conditions $(a_0, b_0, \theta_0, u_{a_0}, u_{b_0}) = (-1.5, -1.5, -\pi /2, 1, 1)$; moreover, the optimal solution of the problem is $r^* = (-0.5, -0.5)$. As evidenced in Figure ~\ref{fig:unicyclePerception}(a), differences persist between the trajectories produced by the nominal controller, which utilizes perfect state information, and by the perception-based controller. Importantly, imperfect estimates of $(a,b)$ are also due to the fact that the labels returned from our trained network correspond to the pixels of the image, which can limit how well the $(a,b)$ values are represented. Figure~\ref{fig:unicyclePerception}(b) shows sample images at times $t=4.32$ and $t=70.36$; these images are used as inputs to the neural network. 
 As expected, the ideal controller converges to the reference state arbitrarily well, while the
perception-based controller converges to the reference state to within a neighborhood dependent on the error associated with the perceived state. Overall, these simulations demonstrate the claims made in Proposition \ref{prop:General_perceptionstate}.

\section{Application to epidemic control}
\label{sec:results}
In this section, we apply the proposed framework to regulate the 
spread of an epidemic by controlling contact-relevant transmissions. 
The latter is achieved by e.g. selecting the intensity of 
restrictions such as mask-wearing, social restrictions, school 
closures, and stay-at-home orders, etc.
To describe the epidemic evolution, we adopt a 
susceptible-infected-susceptible (SIS) model \cite{AF-AI-GS-JT:07}, 
described by:
\begin{align}
\label{eq:SISscalar}
\dot s &= \mu - \mu s - u \beta s x + \gamma x,  &
\dot x &= u \beta s x - (\gamma + \mu) x, 
\end{align}
where $s\in \real ,x \in \real$ describe the fraction of susceptible 
and infected population, respectively, with $s+x=1$ at all times, 
$u \in (0,1]$ is an input modeling the reduction in contact-relevant 
transmissions, $\beta>0$ is the transmission rate,  $\mu>0$ is the 
death/birth rate, and $\gamma>0$ is the recovery rate. 
Model parameters of~\eqref{eq:SISscalar} are chosen as follows: 
$\beta=4$, $\gamma=1/9$, $\mu=10^{-4}$, cost function parameters: 
$\sps{u}{ref}=0.36$, $\sps{x}{ref}=0.85$, $w_\psi=w_\psi=1$.
As characterized in \cite[Thm. 2.1 and Lem. 2.1]{AF-AI-GS-JT:07}, 
\eqref{eq:SISscalar} admits a unique (unstable) equilibrium point 
described by $x=0$ (called disease-free equilibrium) and a unique 
equilibrium point with $x\neq 0$ (called endemic equilibrium) that is 
exponentially stable~\cite[Thm. 2.4]{AF-AI-GS-JT:07}, thus satisfying 
Assumption~\ref{as:stabilityPlant}. 
We utilize the control problem~\eqref{opt:objectiveproblem} to determine an 
optimal balance between a desired fraction of infections $\sps{x}{ref}$ and 
a desired level of restrictions $\sps{u}{ref}$. More formally, we consider an 
instance of~\eqref{opt:objectiveproblem} with 
$\phi(u) = w_\phi (u-\sps{u}{ref})^2$ 
and $\psi(x) = w_\psi(x-\sps{x}{ref})^2$, where 
$\sps{u}{ref}, \sps{x}{ref} \in [0,1]$ are desired reference inputs 
and states and $w_\phi, w_\psi \in \realnneg$ are weighting factors.
For our simulations, we perform the change of variables 
$(\tilde x, \tilde u)=(x, \frac{1}{u})$; in the new variables, 
$h(\tilde u)=1-\frac{\mu + \gamma}{\beta}\tilde u$ satisfies 
Assumptions~\ref{as:steadyStateMap} and \ref{as:LipsConvex}. 
In order to illustrate the effects of perception on the control loops, 
in what follows we assume $w_t=0$ so all the tracking error can be 
associated to perception errors.

\subsection{Optimization with State Perception}
We begin by illustrating the case of state perception 
(Section~\ref{sec:statePerception}). 
One of the main challenges in predicting and controlling the outbreak of an 
epidemic is related to the difficulty in estimating the true number of 
infections from incomplete information describing documented infected 
individuals with symptoms severe enough to be confirmed. 
During the outbreak of COVID-19, researchers have proposed several 
methods to overcome these challenges and by using several sources of data 
including \cite{armstrong2021identifying} detected cases, recovered cases, 
deaths, test positivity~\cite{chiu2021using}, and mobility 
data~\cite{li2020substantial,GB-ED-JP-AB:21-scirep}.
In our simulations, we adopted the approach in 
Algorithm~\ref{alg:perception_cost} to achieve this task.
For the purpose of illustration, we utilized a map $q(\cdot)$ composed of a set 
of $4$ Gaussian basis functions with mean $\sbs{\mu}{b} = (1, 5, 9, 13)$ and 
variance $\sigma= I$ to  determine the perception signal $\xi$.
The training phase of Algorithm~\ref{alg:perception_cost} has then been 
performed using a feedforward neural network to determine the map 
$\hat x = \hat p(\xi)$ to reconstruct the state of~\eqref{eq:SISscalar}. 
Simulation results are illustrated in Figure~\ref{fig:SISstatePerception}.
As illustrated by the state trajectories in 
Figure~\ref{fig:SISstatePerception}(a)-(b), the use of a neural network 
originates discrepancies between the true state and the estimated state, 
especially during  transient phases. Figure~\ref{fig:SISstatePerception}(c) 
provides a comparison of the tracking error between the use of the ideal 
controller~\eqref{eq:ideal_controller} (which uses the exact state) and the 
controller in Algorithm~\ref{alg:perception_cost}. The numerics illustrate that 
while the exact controller~\eqref{eq:ideal_controller} is capable of converging 
to the desired optimizer with arbitrary accuracy, the controller in 
Algorithm~\ref{alg:perception_cost} yields an error of the order $10^{-2}$,
due to uncertainties in the reconstructed system state. Overall, the simulations
validate the convergence claim made in 
Proposition~\ref{prop:General_perceptionstate_forward}.

\begin{figure}[t]
\centering
\subfigure[]{\includegraphics[width=1.0\columnwidth]{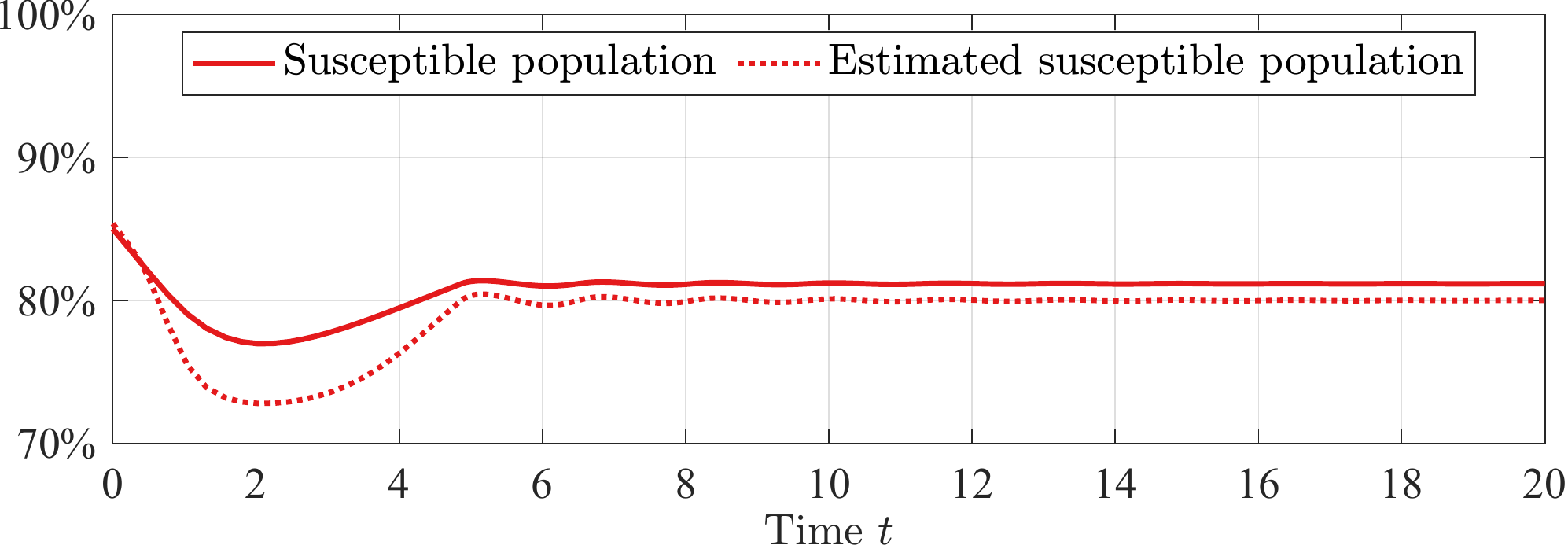}}\\
\centering
\subfigure[]{\includegraphics[width=.95\columnwidth]{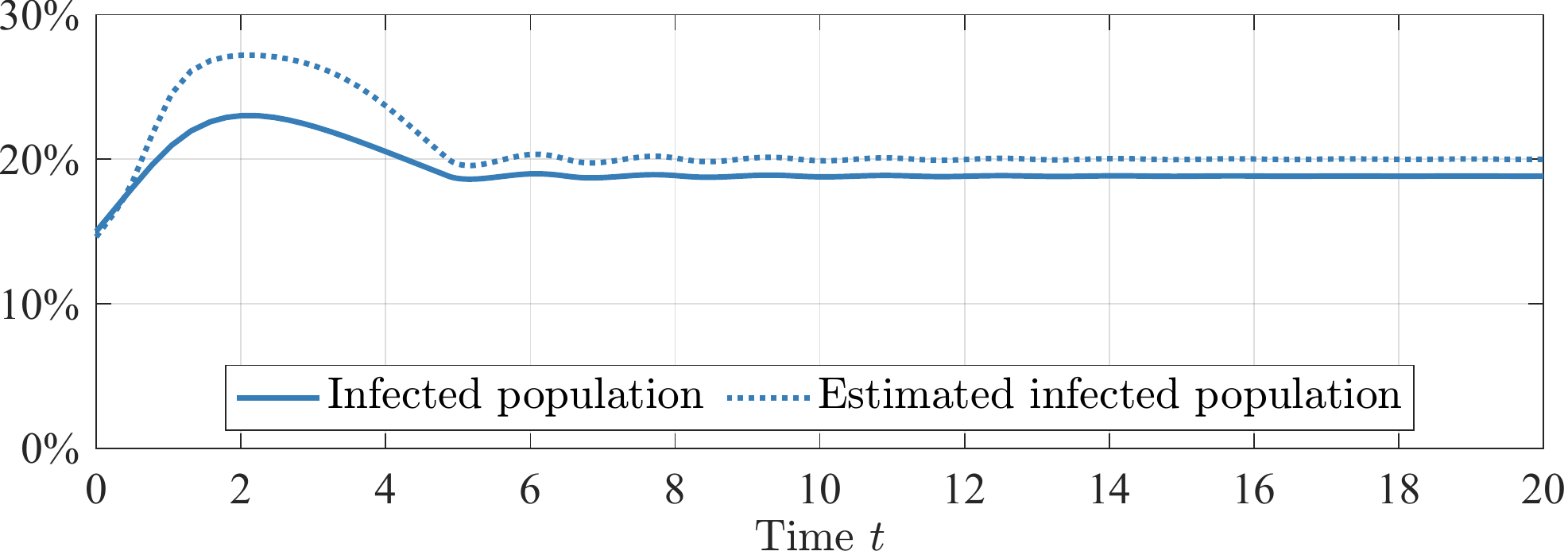}}\\
\centering
\subfigure[]{\includegraphics[width=.95\columnwidth]{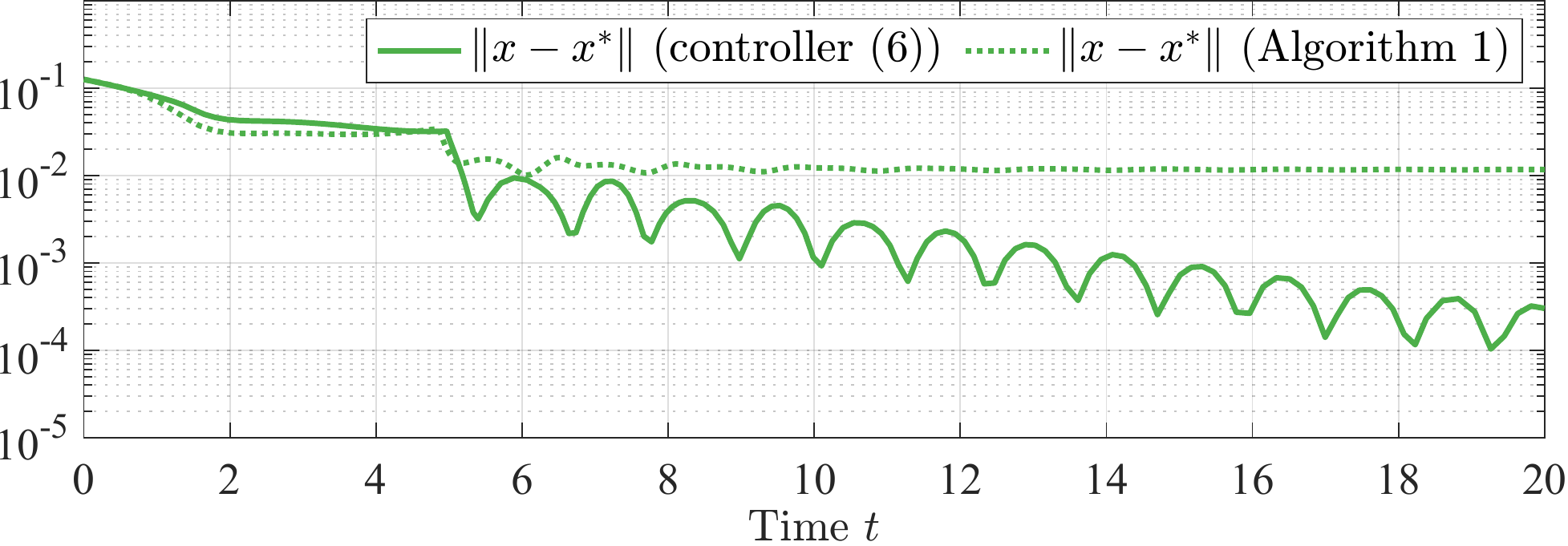}} 
\caption{Numerical validation of Algorithm~\ref{alg:perception_cost}. The 
technique is used to control an epidemic SIS model whose state cannot be 
directly measured and is instead reconstructed using a neural network. Panels 
(a)-(b) compare the time-evolution of the true states and of the estimated 
states. Panel (c) compares the performance of the ideal 
controller~\eqref{eq:ideal_controller} with that of the perception-based 
counterpart described in Algorithm~\ref{alg:perception_cost}. Steady-state 
errors in Algorithm~\ref{alg:perception_cost} are associated to errors 
originated by the use of a neural network (see 
Proposition~\ref{prop:General_perceptionstate_forward}).}
\label{fig:SISstatePerception}
\vspace{-.5cm}
\end{figure}

\subsection{Optimization with Cost-Function Perception}
Next, we illustrate the case of cost perception 
(Section~\ref{sec:costPerception}).
For illustration purposes, we focus on cases where the analytic expression of 
$\phi(u)$ in~\eqref{opt:objectiveproblem} is known, while $\psi(x)$ is unknown.
As described in Algorithm~\ref{alg:cost_perception}, we utilized a set of 
samples $\{(x_i, \psi(x_i))\}_{i=1}^{M}$ to train a feedforward neural network 
to approximate the function $\psi(x)$.
Simulation results are illustrated in  Figure~\ref{fig:SISsimulation}.
Figure~\ref{fig:SISsimulation}(a) illustrates the set of samples used for training
and provides a comparison between the true gradient $\nabla \psi(x)$ and the 
approximate gradient $g_x(x)$, obtained through the neural network.
Figure~\ref{fig:SISsimulation}(b) provides a comparison between the state 
trajectories obtained by using the ideal controller~\eqref{eq:ideal_controller} 
and those obtained through the perception-based controller in 
Algorithm~\ref{alg:cost_perception}.
Figure~\ref{fig:SISstatePerception}(c) provides a comparison 
of the tracking error between the use of the ideal 
controller~\eqref{eq:ideal_controller} (which uses the exact gradients of the 
cost) and the controller in Algorithm~\ref{alg:cost_perception}.
The simulations illustrate that while the exact 
controller~\eqref{eq:ideal_controller} is capable of converging 
to the desired optimizer with arbitrary accuracy, the controller in 
Algorithm~\ref{alg:cost_perception} yields a nontrivial steady-state error 
due to uncertainties in the gradient function. Overall, the simulations
validate the convergence claim made in 
Proposition~\ref{prop:General_perceptioncost_forward}.

\begin{figure}[t]
\subfigure[]{\includegraphics[width=.47\columnwidth]{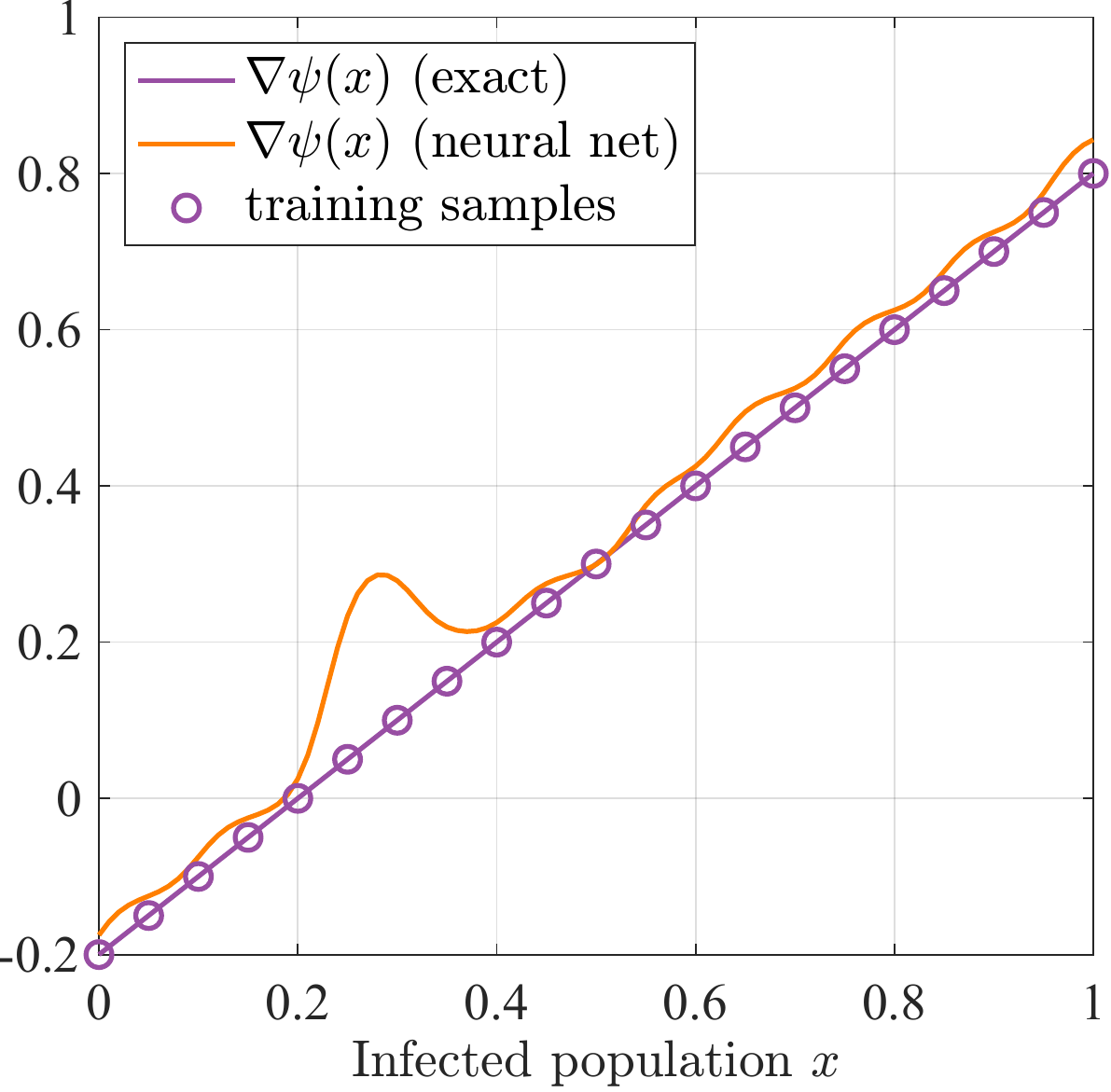}}  \hfill
\subfigure[]{\includegraphics[width=.52\columnwidth]{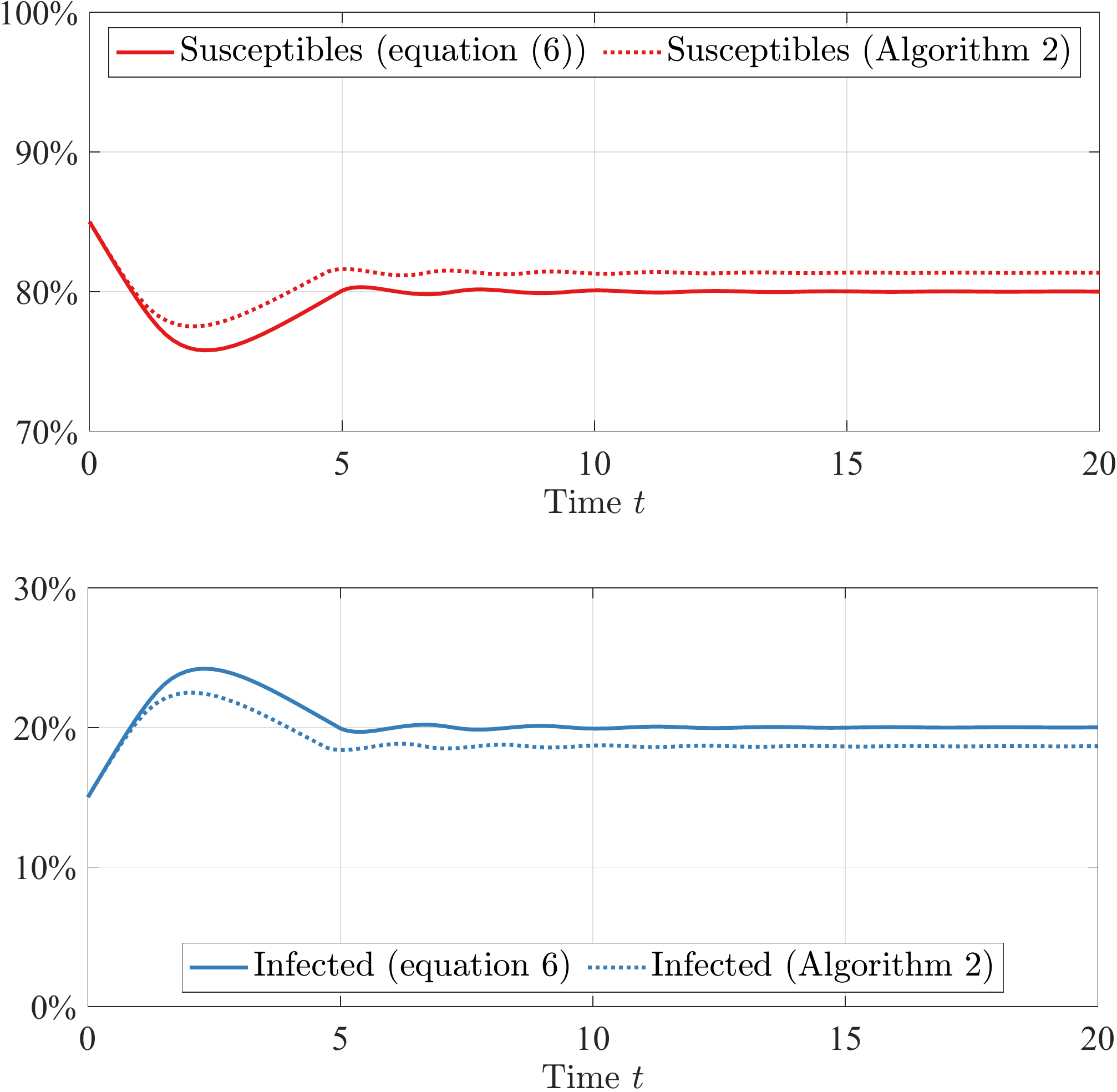}}\\
\subfigure[]{\includegraphics[width=\columnwidth]{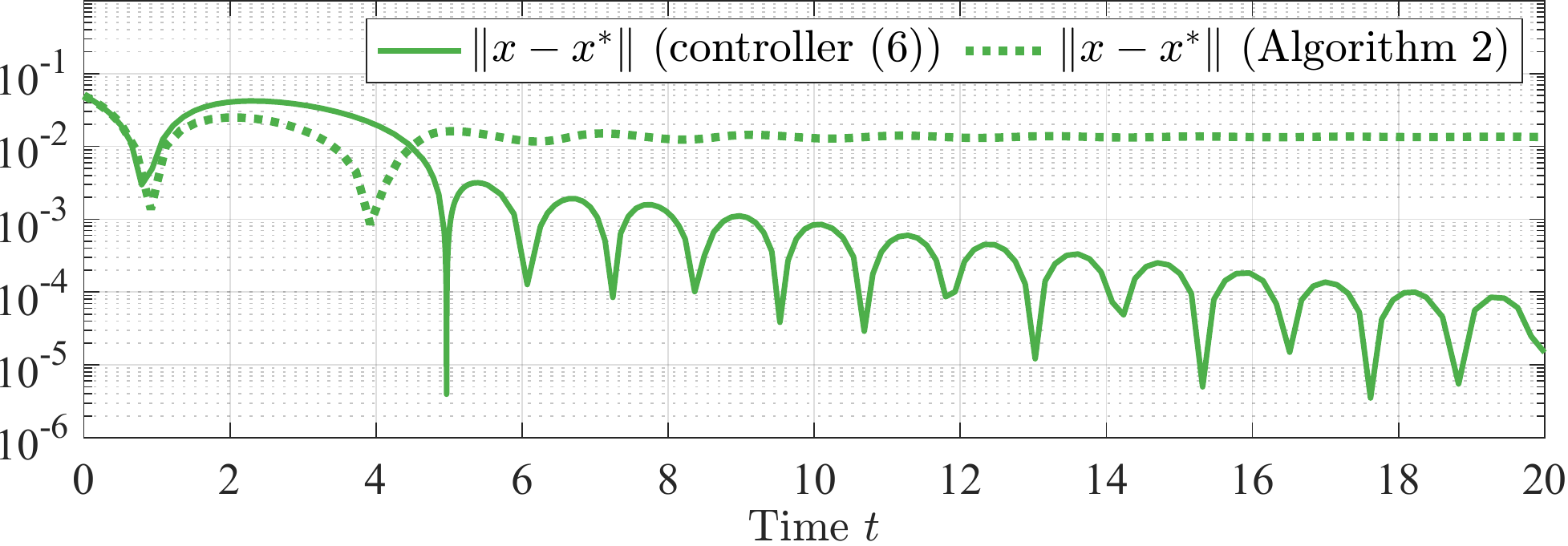}} 
\caption{Numerical validation of Algorithm~\ref{alg:cost_perception}. The 
technique is used to control an epidemic SIS model, where the economic impact of
 infections is unknown and must be estimated from samples using a neural 
network. Panel (a) illustrates the approximation accuracy of the neural network 
(used to approximate the gradient of the function). Panel~(b) compares the 
time-evolution of the true states obtained by using 
Algorithm~\ref{alg:cost_perception} and those obtained by using the ideal 
controller~\eqref{eq:ideal_controller}. Panel (c) compares the tracking error of
the ideal and perception-based control methods.
Steady-state errors in Algorithm~\ref{alg:cost_perception} are associated to 
errors originated by the use of a neural network to compute the gradients  (see 
Proposition~\ref{prop:General_perceptioncost_forward}).}
\label{fig:SISsimulation}
\vspace{-.5cm}
\end{figure}

\section{Conclusions}
\label{sec:conclusions}

We proposed algorithms to control and optimize dynamical systems when the 
system state cannot be measured and the cost functions in the optimization are 
unknown, instead, these quantities can only be sensed via neural network-based 
perception.
Our results show for the first time how feedback-based optimizing controllers 
can be adapted to operate with perception in the control loop.
Our findings crucially hinge on recent results on the uniform approximation 
properties of deep neural networks.
We believe that the results can be further refined to account for cases where 
the training of the neural networks is performed online and we are currently 
investigating such possibility. While this paper provided conditions to obtain exponential ISS results for the interconnection of a plant with a projected gradient-flow controller, future efforts will analyze the interconnection of plants and controllers that are (locally) asymptotically stable when taken individually. 

\appendix[Proof of Theorem~\ref{thm:General_stability}]

The proof of this claim uses Lyapunov-based singular perturbation reasonings, 
and is organized into seven main~steps.

\noindent
\textit{(1 -- Change of variables)}
We consider the change of variables \( \tilde x = x - h(u,w)\), which shift the 
equilibrium of~\eqref{eq:plantModel} to the origin. In the new 
variables,~\eqref{eq:closedloop_error}~reads as:
\begin{subequations}
\label{eq:shiftedPlantAndController}
\begin{align}
\label{eq:shiftedPlantAndController-a}
\dot{\tilde x} & =  f(\tilde x + h(u,w),u,w) 
- \frac{d}{dt} {h}(u,w),\\
\label{eq:shiftedPlantAndController-b}
\dot u & = \proj \{ u - \eta( \tilde F(\tilde x, u)  
+ e(\tilde x + h(u,w_t),u) \} -  u,  
\end{align}
\end{subequations}
where we denoted, in compact form, 
$\tilde F(\tilde x, u) := F(\tilde x + h(u,w), u)$,
$\tilde e(\tilde x,u) := e(\tilde x + h(u,w_t),u)$.
Before proceeding, we note that:
\begin{align}\label{eq:boundF}
\|\tilde F(\tilde x,u) - \tilde F(0,u_t^*)\| &\leq \nonumber
\|\tilde F(\tilde x,u) - \tilde F(0,u)\| + \\
&\quad\quad\quad\quad\quad \|\tilde F(0,u) - \tilde F(0,u_t^*)\|\nonumber\\
&\leq \ell_{h_u} \ell_y \|\tilde x\| + \ell \|u - u_t^*\|, 
\end{align}
where we recall that $(\ell_{h_u}, \ell_y, \ell)$
are as in Assump.~\ref{as:steadyStateMap} and~\ref{as:LipsConvex}(a).

\noindent
\textit{(2 -- Lyapunov functions)}
Inspired by singular perturbation reasonings~\cite[Ch. 11]{Khalil:1173048}, we 
adopt the following composite Lyapunov function 
for~\eqref{eq:shiftedPlantAndController}:
\begin{align} 
\nu(\tilde x, u, t) 
:= \theta \frac{1}{\eta} W(\tilde x, u, w_t) +
(1 - \theta) \frac{1}{\eta} V(u, u_t^*), 
\end{align}
where $\theta$ is as defined in~\eqref{eq:uglyConstants}, 
$W(\tilde x, u, w)$ describes a Lyapunov function for~\eqref{eq:shiftedPlantAndController-a} and is 
given in Lemma~\ref{lem:converse_plant}, and  $V(u, u_t^*)$ describes a Lyapunov
function for~\eqref{eq:shiftedPlantAndController-b} and is given by:
$ V(u,u_t^*) = \frac{1}{2}\| u - u_t^*\|^2$.

Before proceeding, we notice that $\nu(\tilde x, u, t) $ satisfies
\begin{align*}
c_1 \|(\tilde x,u-u_t^*)\|^2 \leq \nu(\tilde x, u, t)  \leq c_2\|(\tilde x,u-u_t^*)\|^2,
\end{align*}
where $c_1,c_2$ are as in~\eqref{eq:uglyConstants}.

\noindent
\textit{(3 -- Bound for the Time-Derivative of $W$)}
We begin by bounding the time-derivative $\frac{d}{dt} W$.
To this end, notice that Lemma~\ref{lem:converse_plant} guarantees 
the following estimates:
\begin{align*}
&\frac{\partial W}{\partial \tilde x}
f(\tilde x + h(u,w),u,w) 
\leq -d_3 \norm{\tilde x}^2,\\
&\frac{\partial W}{\partial \tilde x} \frac{d}{dt} {h}(u,w)
= \frac{\partial W}{\partial \tilde x}
\left( \nabla_u h(u,w) \dot u + \nabla_w h(u,w) \dot w \right)\\
&\quad\quad\quad\quad\quad\quad
\leq d_4  \norm{\tilde x} (\ell_{h_u} \norm{\dot u}  + \ell_{h_w} \norm{\dot w}).
\end{align*}
By using the above estimates and Lemma~\ref{lem:converse_plant}, we have:
\begin{align*}
\frac{d}{dt} W (\tilde x, u, w) &\leq 
-d_3 \|\tilde x\|^2 
+ (d_4 \ell_{h_u} +d_5) \|\dot u\| \|\tilde x\| \nonumber\\
&\quad\quad
+ (d_4 \ell_{h_w}+ d_6) \|\dot w_t\| \|\tilde x\|.
\end{align*}
Next, refine the bound on the second term. Notice that
\begin{align*}
\| \dot u \| &= \| \proj \left\{ u - \eta \tilde F(\tilde x,u) + \eta \tilde e(\tilde x, u) \right\} - u \|\\
&\leq \eta \| \tilde F(\tilde x, u) - \tilde F(0,u_t^*)\| + \eta \|\tilde e(\tilde x, u)\|\\
&= \eta \ell_{h_u}  \ell_y \|\tilde x\| + \eta \ell \|u - u_t^*\| + \eta \|\tilde e(\tilde x, u)\|.
\end{align*}
To obtain the first inequality, add $u^* - \proj \{u^* - \eta \tilde F(0,u^*) \} = 0$ and apply the triangle inequality. Then, apply the bounds obtained in \eqref{eq:boundF}.
It follows that
\begin{align}
\label{eq:dotWfinal}
& \frac{d}{dt} W(\tilde x,u,w) \leq 
-d_3 \|\tilde x\|^2 
+ \eta \ell_{h_u}\ell_y( d_4 \ell_{h_u} + d_5) \|\tilde x\|^2 \nonumber\\
&\quad
+ \eta \ell (d_4 \ell_{h_u} +d_5) \|u - u^*\| \|\tilde x\|\\
&\quad
+ \eta (d_4 \ell_{h_u} +d_5) \|e(\tilde x, u)\| \norm{\tilde x}
+ (d_4 \ell_{h_w} +d_6) \| \dot w_t\| \|x\|.\nonumber
\end{align}

\noindent
\textit{(4 -- Bound for the Time-Derivative of $V$)}
Next, we bound the time-derivative $\frac{d}{dt} V$.
In what follows, we use the compact notation $\tilde u := u -u_t^*$.
By expanding:
\begin{align}\label{eq:dotV}
\begin{split}
\frac{d}{dt} V(u,u_t^*) &= 
\tilde u^\top \dot u - \tilde u^\top \dot{u}_t^*
\leq \tilde u^\top \dot u + \ell_J \|\dot w\|\|\tilde u\|,
\end{split}
\end{align}
where we used Assumption~\ref{as:mapu}.
The first term above satisfies:
\begin{align}
\label{eq:auxBoundV}
\tilde u^\top \dot u &\leq 
\tilde u^\tsp (\proj \{u - \eta \tilde F(0,u)\} - u)  \\
& \quad\quad\quad
+ \eta \ell_{h_u}\ell \|\tilde x\|  \|\tilde u\| 
+ \eta \|\tilde u\| \|\tilde e(\tilde x, u)\|.\nonumber
\end{align}
To obtain~\eqref{eq:auxBoundV}, we added and subtracted  
$\proj \{u - \eta F(0,u) \}$, we applied the triangle inequality, and 
we used Assumptions~\ref{as:steadyStateMap} 
and~\ref{as:LipsConvex}(a).
Next, we use the fact that the first term on the right hand side 
of~\eqref{eq:auxBoundV} satisfies:
\begin{align}
\label{eq:auxBoundV2}
\tilde u^\top \left(P_u - u \right) \leq 
 - \eta \left( \mu - \eta \ell^2/4 \right)  \|\tilde u\|^2.
\end{align}
(This fact will be proven in Step 5, shortly below.)
By combining \eqref{eq:dotV}-\eqref{eq:auxBoundV}-\eqref{eq:auxBoundV2} we 
conclude that:
\begin{align}
\label{eq:dotVFinal}
\frac{d}{dt} V &\leq - \eta \left( \mu - \eta \frac{\ell^2}{4} \right) 
\|\tilde u\|^2 
+ \eta \ell_{h_u}\ell \|\tilde x\| \|\tilde u\| \nonumber\\
& \quad\quad + \eta \|\tilde u\| \|e(\tilde x, u)\| + \ell_J \|\dot w\| \|\tilde u\|.
\end{align}

\noindent
\textit{(5 -- Proof of \eqref{eq:auxBoundV2})} For brevity, in what follows we 
use the compact notation $P_u := \proj \{u - \eta \tilde F(0,u) \}.$
To prove~\eqref{eq:auxBoundV2}, we recall the following property of the 
projection operator:
\[ \left( u' - \proj (v) \right)^\top \left( \proj (v) - v \right) \geq 0, ~~\forall~ u' \in \mathcal{C}, \]
which holds for any $v \in \mc \real^{n_u}$.
By applying this property with $u' := u_t^*$ and $v = u - \eta \tilde F(0,u)$,
we have:
\[\left( u_t^* - P_u \right)^\top ( P_u - u + \eta \tilde F(0,u) ) 
\geq 0. \]
By expanding and by adding \( u^\top (u - P_u + \eta \tilde F(0,u) ) \) to 
both sides of the inequality, the left hand side reads as:
\[ u^\top u - u^\top P_u - u_t^{*\top } u + u_t^{*\top} P_u + \eta u^\top \tilde F(0,u) - \eta \tilde F(0,u)^\top P_u,  \]
while the right hand side reads as:
\[u^\top u - P_u^\top u - u^\top P_u^\top +  P_u^\top P_u + \eta u^\top \tilde F(0,u) - \eta u_t^{*^\top} \tilde F(u,0), \]
and, after regrouping, 
\begin{align*}
\tilde u^\top \left(u - P_u \right) \geq \|u - P_u\|^2 
+ \eta (P_u - u_t^*)^\top \tilde F(u,0).
\end{align*}
By adding and subtracting 
\( \eta \left( P_u - u_t^*\right)^\top \tilde F(0,u_t^*) \):
\begin{align*}
\tilde u^\top (P_u - u ) &\leq 
- \|u - P_u\|^2 - \eta (P_u - u_t^*)^\top \tilde F(u_t^*,0)\\
& \quad
- \eta (P_u - u_t^*)^\top ( \tilde F(0,u) - \tilde F(0,u_t^*) ).
\end{align*}
From this, expand \( - (P_u - u_t^*) = u - P_u - \tilde u \) and apply 
\( \left( P_u - u_t^* \right)^\top \tilde F(0,u) \geq 0. \) Use strong convexity and Lipschitz-continuity to obtain, 
\begin{align*}
\tilde u^\top \left(P_u - u \right) &\leq 
- \|u - P_u\|^2 - \eta \mu \|\tilde u\|^2 
+ \eta \ell \|u - P_u\| \|\tilde u\|\\
&\leq 
 - \eta \left( \mu - \eta \ell^2/4 \right)  \|\tilde u\|^2,
\end{align*}
where the second inequality follows by recalling that, for real numbers $a,b$, 
it holds \( 2ab < a^2 + b^2 \), and by letting $a = \|u - P_u\|$ and 
$b = \frac{1}{2}\eta \ell \|\tilde u\|$.
Hence, \eqref{eq:auxBoundV2} is proved.

\noindent
\textit{(6 -- Conditions for Exponential Convergence)}
By combining \eqref{eq:dotWfinal} and \eqref{eq:dotVFinal}:
\begin{align}\label{eq:firstBoundNu}
\frac{d}{dt} \nu &\leq 
(1 - \theta) \{ -\left(\frac{\mu - \eta \ell^2}{4} \right) \|\tilde u\|^2  
+ \|\tilde e(\tilde x,u)\| \| \tilde u\| \\
&~~+ \frac{\ell_J}{\eta}\|\dot w_t\| \|\tilde u\|
+ \ell_{h_u}\ell \|\tilde x\| \|\tilde u\|\}\nonumber\\
& ~~  + \theta \{ - \frac{d_3}{\eta} \|\tilde x\|^2 
+\ell_{h_u}\ell_y( d_4 \ell_{h_u} + d_5) \|\tilde x\|^2 \nonumber\\
&\quad
+ \ell (d_4 \ell_{h_u} +d_5) \|\tilde u\| \|\tilde x\|\nonumber\\
&\quad
+ (d_4 \ell_{h_u} +d_5) \|\tilde e(\tilde x, u')\|
+ \frac{(d_4 \ell_{h_w} +d_6)}{\eta} \| \dot w_t\| \|x'\|\}.\nonumber
\end{align}
By using \eqref{eq:ControllerGain}, we have $\eta < \frac{2\mu}{\ell^2}$, and thus
the first term satisfies 
$(\frac{\eta \ell^2}{4} - \mu ) \|\tilde u\|^2 \leq - \frac{\mu}{2} \norm{\tilde u}^2$. Next, let $s \in (0,1)$ be a fixed constant.  Then,~\eqref{eq:firstBoundNu}
can be rewritten as:
\begin{small}
\begin{align}\label{eq:firstBoundNu2}
\frac{d}{dt} \nu &\leq 
(1 - \theta) \{ - \frac{(1-s)\mu}{2} \|\tilde u\|^2  
+ \|\tilde e(\tilde x,u)\| \| \tilde u\| \nonumber\\
&~~+ \frac{\ell_J}{\eta}\|\dot w_t\| \|\tilde u\|
+ \ell_{h_u}\ell \|\tilde x\| \|\tilde u\|\}\nonumber\\
& ~~  + \theta \{ - \frac{(1-s)d_3}{\eta} \|\tilde x\|^2 
+\ell_{h_u} \ell_y (d_4  \ell_{h_u} +d_5) \|\tilde x\|^2 \nonumber\\
&~~+ \ell(d_4 \ell_{h_u} +d_5) \|\tilde u\| \|\tilde x\| \nonumber\\
& ~~ + (d_4 \ell_{h_u} +d_5) \|\tilde e(\tilde x,u) \|\tilde x\| 
+ \frac{d_4\ell_{h_w} + d_6}{\eta}  \| \dot w_t\| \|\tilde x\| \}\nonumber\\
&~~- \underbrace{\min \{ s \mu \eta, s \frac{d_3}{d_2} \}}_{=c_0}\nu,
\end{align}
\end{small}
where we used $-(1 - \theta) s\frac{\mu}{2}\|\tilde u\|^2 = 
-(1 - \theta) s \mu V(u,u_t^*)$ and 
$- \theta s \frac{d_3}{\eta} \|x'\|^2 \leq - \frac{\theta s}{\eta} \frac{d_3}{d_2} \|x'\|^2 \leq - \frac{\theta s}{\eta} \frac{d_3}{d_2} W$.
The bound~\eqref{eq:firstBoundNu2} can be rewritten as:
\begin{small}
\begin{align}\label{eq:firstBoundNu3}
\frac{d}{dt} \nu &\leq - c_0 \nu 
- \xi^\top \Lambda \xi 
+ (1 - \theta) \{ \|\tilde e(\tilde x, u)\| \|\tilde u\| 
+ \frac{\ell_J}{\eta} \|\dot w_t\| \|\tilde u\|\}\nonumber\\
&~~+ \theta \{(d_4 \ell_{h_u} + d_5))\|\tilde e(\tilde x, u)\| \|\tilde x\| 
+ \frac{d_4 \ell_{h_w}+d_6}{\eta} \| \dot w_t\| \|\tilde x\|  \},
\end{align}
\end{small}
where $\xi = (\norm{\tilde u}, \norm{\tilde x})$ and
\begin{align*}
    \Lambda &=\left(\begin{array}{ll}(1 - \theta)\alpha_1 & -\frac{1}{2}[(1-\theta)\beta_1 + \theta \beta_2] \\ -\frac{1}{2}[(1-\theta)\beta_1 + \theta \beta_2] & \theta[\frac{\alpha_2}{\eta} - \beta_1\beta_2]
    \end{array}\right),
\end{align*}
with $\alpha_1 := (1 - s)\frac{\mu}{2}$, 
$\alpha_2 := (1 - s)d_3$, 
$\beta_1 := \ell \ell_{h_u}$, 
$\beta_2 := \ell(d_4 \ell_{h_u}+d_5)$, and
$\gamma := \ell_y \ell_{h_u}(d_4 \ell_{h_u} + d_5)$.
Finally, we will show that $\xi^\top \Lambda \xi\leq 0$. Equivalently,
$\Lambda$ is positive definite if and only if its principal minors are positive, 
namely,
\begin{align*}
    [(1-\theta)\alpha_1]&[\theta \left(\frac{\alpha_2}{\eta}- \beta_1  \beta_2\right)] > \frac{1}{4}[(1-\theta)\beta_1 + \theta \beta_2]^2,
\end{align*}
or, equivalently,
\begin{equation*}
    \eta < \frac{ \alpha_1\alpha_2}{\frac{1}{4\theta (1-\theta)}[(1-\theta)\beta_1 + \theta \beta_2]^2 + \alpha_1 \beta_1  \beta_2}. 
\end{equation*}
The right hand side is a concave function of $\theta$ and, by maximizing it with
respect to $\theta$, we have that the maximum is obtained at 
$\theta^* = \frac{\beta_1}{\beta_1 + \beta_2},$ which gives
\begin{align*}
\eta < \frac{\alpha_1 \alpha_2}{\alpha_1 \gamma + \beta_1 \beta_2},
\end{align*}
which holds under~\eqref{eq:ControllerGain}.

\noindent
\textit{(7 -- Derivation of Convergence Bound)}
We begin by showing that~\eqref{eq:firstBoundNu3} can be further refined as follows:
\begin{align}\label{eq:firstBoundNu4}
\frac{d}{dt} \nu &\leq - c_0 \nu + c_3 \gamma \nu 
+ \left( c_4 \delta + c_5 \|\dot w_t\| \right) \sqrt{\nu}.
\end{align}
To this end, we first notice that the following inequalities are true: 
$\|\tilde u\| \leq \sqrt{2}\sqrt{V} \leq \sqrt{2} \frac{\sqrt{\eta}}{\sqrt{(1 - \theta)}}\sqrt{\nu}$, 
$\|\tilde x\| \leq \frac{\sqrt{W}}{\sqrt{d_1}} \leq \frac{\sqrt{\eta}}{\sqrt{\theta} \sqrt{d_1}}\sqrt{\nu}$ and that 
$\sqrt{\theta}, \sqrt{1 - \theta} \in (0,1)$ since 
$\theta \in (0,1)$.
Using these facts, the two positive terms in~\eqref{eq:firstBoundNu3} satisfy:
\begin{align*}
\frac{(1 - \theta)}{\eta}\ell_J \|\dot w_t\| \|\tilde u\| &
\leq \eta^{1/2} \sqrt{2}\ell_J \|\dot w_t\| \sqrt{\nu}\\
\theta \frac{d_4 \|\nabla_w h(u,w)\|}{\eta} \|\dot w_t\| \|\tilde x\| &
\leq d_4 \ell_{h_w} \frac{\| \dot w_t\|}{\sqrt{\eta}\sqrt{d_1}} \sqrt{\nu},
\end{align*}
from which \eqref{eq:firstBoundNu4} follows with $c_3, c_4, c_5$ as 
in~\eqref{eq:uglyConstants}.

To conclude, define  $\nu' := \sqrt{\nu}$. Then,
\begin{align*}
\dot \nu' &\leq -\frac{1}{2}\left(c_0  - c_3 \gamma \right) \nu' + \frac{1}{2}\left( c_4 \delta + c_5 \|\dot w_t\| \right).
\end{align*}
Define \( \Phi(t,t_0) := \exp{ \left(-\frac{1}{2} (c_0-c_3) (t - t_0)\right) }\)
for $t \geq t_0.$ By the comparison lemma~\cite{Khalil:1173048}: 
\begin{align*}
\dot \nu' &\leq \Phi(t,t_0) \dot \nu'(t_0) + \int_{t_0}^t \Phi(t,\tau) \left(\frac{c_4}{2} \delta + \frac{c_5}{2}\|\dot w_{\tau}\| \right) d \tau,
\end{align*}
which implies, by recalling \(c_1 \|z\|^2 \leq \nu \leq c_2 \|z\|^2 \),  that
\begin{align*}
\|z(t)\| &\leq 
\sqrt{\frac{c_2}{c_1}} \Phi(t,t_0) \|z(t_0)\| 
+ \frac{c_4}{2 \sqrt{c_1}} \int_{t_0}^t \Phi(t,\tau) \delta d \tau \\
&\quad\quad + \frac{c_5}{2 \sqrt{c_1}}\int_{t_0}^t \Phi(t,\tau) \| \dot w_{\tau}\| d \tau,
\end{align*}
from which~\eqref{eq:genErrorBound} follows.


\bibliographystyle{IEEEtran}
\bibliography{alias, bibliography,main_GB,biblio_perception.bib}

\end{document}